\def\mathllap{\mathpalette\mathllapinternal}
\def\mathllapinternal#1#2{\llap{$\mathsurround=0pt#1{#2}$}}
\algrenewcommand\algorithmicindent{0.8em}  
\xpatchcmd{\algorithmic}{\itemsep\z@}{\itemsep=0.5ex plus2pt}{}{}
\newcommand{\0}{\mathbf{0}}
\newcommand{\df}{\nabla\! f}
\renewcommand{\L}{\mathcal{L}}
\newcommand{\V}{\mathcal{V}}
\newcommand{\E}{\mathcal{E}}
\newcommand{\W}{W}
\DeclareMathOperator{\cond}{cond}
\newtheorem{thm}{Theorem}
\newtheorem{prop}[thm]{Proposition}
\newcounter{asm}
\newtheorem{assumption}[asm]{Assumption}
\newtheorem{rem}[thm]{Remark}
\def\qed{\rule[0pt]{5pt}{5pt}\par\medskip}
\renewcommand{\qedhere}{\hfill ~\qed}
\renewenvironment{proof}{{\noindent\bf Proof.}}{\qedhere}
\newcommand{\bmat}[1]{\begin{bmatrix}#1\end{bmatrix}}
\newcommand{\squeezearray}{\addtolength{\arraycolsep}{-2pt}}
\newcommand{\tp}{\mathsf{T}}
\renewcommand{\epsilon}{\varepsilon}
\DeclareMathOperator*{\argmin}{\arg\min}
\DeclareMathOperator*{\minimize}{minimize}
\DeclareMathOperator*{\maximize}{maximize}
\newcommand{\defeq}{\colonequals}
\newcommand{\grad}{\nabla\!}
\newcommand{\R}{\mathbb{R}}	
\let\bl\bigl
\let\bbbl\biggl
\let\br\bigr
\let\bbbr\biggr
\newcommand{\norm}[1]{\lVert{#1}\rVert}
\newcommand{\normm}[1]{\bl\lVert{#1}\br\rVert}
\newcommand{\blue}[1]{{\color{blue}#1}}
\renewcommand{\blue}[1]{#1}
\begin{document}
\title{Analysis and Design of First-Order Distributed Optimization Algorithms over Time-Varying Graphs}
\author{Akhil Sundararajan$^{1,2}$ \and Bryan Van Scoy$^{1}$ \and Laurent Lessard$^{1,2}$}
\date{\empty}
\maketitle

\footnotetext[1]{Wisconsin Institute for Discovery, WI~53715, USA.}
\footnotetext[2]{Department of Electrical and Computer Engineering, University of Wisconsin--Madison, WI~53706, USA.
	
	Emails: \{\texttt{asundararaja,vanscoy,laurent.lessard}\}\texttt{@wisc.edu}}

\maketitle


\begin{abstract}
This work concerns the analysis and design of distributed first-order optimization algorithms over time-varying graphs. The goal of such algorithms is to optimize a global function that is the average of local functions using only local computations and communications. Several different algorithms have been proposed that achieve linear convergence to the global optimum when the local functions are strongly convex. We provide a unified analysis that yields the worst-case linear convergence rate as a function of the condition number of the local functions, the spectral gap of the graph, and the parameters of the algorithm.  The framework requires solving a small semidefinite program whose size is fixed; it does not depend on the number of local functions or the dimension of their domain. The result is a computationally efficient method for distributed algorithm analysis that enables the rapid comparison, selection, and tuning of algorithms. Finally, we propose a new algorithm, which we call SVL, that is easily implementable and achieves a faster worst-case convergence rate than all other known algorithms.
\end{abstract}


\section{Introduction}\label{sec:intro}

In distributed optimization, a network of agents, such as computing nodes, robots, or mobile sensors,  work collaboratively to optimize a global objective. Specifically, each agent $i\in\{1,\dots,n\}$ has access to a local function $f_i$ and must minimize the average of all agents' local functions
\begin{equation}\label{eq:distrop-problem}
\min_{x\in \R^d} f(x),
\quad\text{where }f(x) \defeq \frac{1}{n}\sum_{i=1}^n f_i(x),
\end{equation}
\blue{by querying its local gradient $\df_i$, exchanging information with neighboring agents, and performing local computations.

This work aims to study the reliability of distributed optimization algorithms in the presence of a \emph{time-varying} communication graph. Such a scenario could occur if communication links fail due to interference, mobile agents move out of range, or an adversary is jamming communications.}

Distributed optimization is relevant in many application areas.  For example, in large-scale machine learning~\cite{forero2010consensus,johansson2008distributed}, $n$ could represent the number of computing units available for training a large data set. Each $f_i$ then denotes the loss function corresponding to the training examples assigned to  unit $i$. Another example is sensor networks~\cite{rabbat04}, where each sensor may have a limited power budget, communication bandwidth, or sensing capability. \blue{The goal is to aggregate all} local data without having a single point of failure. Other applications include distributed spectrum sensing~\cite{dist_spectrum_sensing} and resource allocation across geographic regions~\cite{dist_power_control}.

\blue{Distributed optimization generalizes both average consensus and centralized optimization, as we now explain.
\paragraph{Consensus} If each agent uses the initial value $x_i^0$ and local objective $f_i(x) = \norm{x-x_i^0}^2$, distributed optimization reduces to \textit{average consensus}~\cite{tsitsiklis,xiaoboyd04}. The unique optimizer of~\eqref{eq:distrop-problem} is then the average of all initial states: $x^\star=\frac{1}{n}\sum_{i=1}^n x_i^0$. Using a \textit{gossip} update of the form $x_i^{k+1} = \sum_{i=1}^n \W_{ij} x_j^k$ where $\W$ is carefully chosen\blue{,} such methods converge exponentially: $\norm{x_i^k -x^\star} \le \rho^k$ with $\rho \in (0,1)$ that depends on $\W$ \cite{FDLA}. This is called a \textit{linear rate} in the optimization community.

\paragraph{Optimization} If $n=1$ or if all $f_i$ are identical, we recover the standard centralized optimization setup. Linear convergence can be guaranteed in certain cases. For example, the gradient descent method $x_i^{k+1} = x_i^k - \alpha \grad f_i(x_i^k)$ achieves linear convergence if $f_i$ is continuously differentiable, smooth, and strongly convex (formally stated in Assumption~\ref{assumption:local_functions})~\cite{NesterovBook}.}

A linear convergence rate for the general case was first achieved by the \textit{exact first-order algorithm} (EXTRA)~\cite{EXTRA}. This algorithm requires storing the previous state in memory:
\begin{subequations}\label{eq:EXTRA}
	\begin{align}
	x_i^1 &= \sum_{j=1}^n \W_{ij}\,x_j^0 - \alpha\, \df_i(x_i^0),\qquad x_i^0\text{ arbitrary}, \\
	\label{eq:EXTRAb}
	x_i^{k+2} &= x_i^{k+1}+\sum_{j=1}^n \W_{ij}\,x_j^{k+1} - \sum_{j=1}^n \widetilde{\W}_{ij}\,x_j^k -\alpha\,\bigl(\df_i(x_i^{k+1})-\df_i(x_i^k)\bigr)
	\end{align}
\end{subequations}
where $\W$ and $\widetilde{\W}$ are gossip matrices that satisfy certain technical conditions and $\alpha$ is sufficiently small.
\blue{Several additional linear-rate algorithms have since been proposed, including: AugDGM~\cite{AugDGM}, DIGing~\cite{DIGing,QuLi}, Exact Diffusion~\cite{ExactDiffusion1,ExactDiffusion2}, NIDS~\cite{NIDS}, and a unified method~\cite{Unification}. Each of these methods have updates similar to~\eqref{eq:EXTRA} in that they require agents to store previous iterates or gradients.}

Although linear convergence rates were obtained for the algorithms above, each algorithm differs in the nature and strength of its convergence analysis guarantees. For example, some works show (non-constructively) the existence of a linear rate~\cite{AB} whereas others provide specific tuning recommendations with associated analytic rate bounds (which may be conservative)~\cite{EXTRA,NIDS}. Numerical simulations are also frequently used~\cite{ABN}, but can be misleading because algorithm performance depends on the graph topology, choice of functions, algorithm initialization, and algorithm tuning.

The present work makes an effort to systematize the analysis and design of distributed optimization algorithms.
We now summarize our main contributions.

\vspace{1mm}\noindent
\textbf{Analysis framework.}
We present a universal analysis framework that \blue{provides an upper bound on the} worst-case linear convergence rate $\rho$ of a wide range of distributed algorithms as a function of the parameters $\kappa$ (local function conditioning) and $\sigma$ (network connectedness).  Our main result, Theorem~\ref{thm:main-result}, is a semidefinite program (SDP) parameterized by
\blue{ $(\kappa,\sigma)$} whose solution yields an upper bound on $\rho$. The SDP has a small fixed size that does not depend on the number of agents $n$ or the dimension of the function domains \blue{and is efficiently solvable}. Our SDP yields robust performance guarantees when the graph is allowed to vary \blue{(even adversarially)} at each iteration. \blue{Fig.~\ref{fig:money_plot} compares the worst-case linear rate $\rho$ for 8 different algorithms.}

\vspace{1mm}\noindent
\textbf{Algorithm design.}
\blue{We present a} new distributed algorithm, which we name SVL (the authors' initials).  SVL is derived by optimizing the SDP from our analysis framework and provides the fastest known convergence rate to date \blue{for this time-varying graph setting}.  \blue{The rate} depends explicitly on~$\kappa$ and~$\sigma$, so no tuning is required if these parameters are known or estimated in advance. \blue{When the graph is well-connected, SVL recovers the performance of gradient descent, which is optimal in this time-varying graph setting.}

\vspace{1mm}\noindent
\textbf{Worst-case examples.}
Although our analysis technique only provides \emph{upper bounds} on the worst-case convergence rate \blue{for} distributed algorithms, we outline a computationally tractable optimization procedure that finds numerically matching lower bounds \blue{by constructing worst-case trajectories, suggesting the bounds found via our analysis technique are tight.}

\begin{rem}[Accelerated rates]
\blue{Distributed algorithms that achieve \textit{accelerated}\cite{QuLi_accelerated,ABN,DHB} or \textit{optimal} \cite{Scaman17} linear rates have also been proposed. It turns out such methods are not guaranteed to achieve acceleration when the graph is time-varying. We discuss this phenomenon in Section~\ref{sec:lower_bounds}, where we derive lower bounds for the time-varying setting.}
\end{rem}

The paper is organized as follows.  We \blue{describe notation and assumptions} in Section~\ref{prelim}. We state and prove our main result for certifying worst-case rate bounds in Section~\ref{main_result}. We present our SVL algorithm and discuss interpretations in Section~\ref{sec:algo_design}. Finally, we demonstrate the tightness of our bounds by generating worst-case trajectories in Section~\ref{numerics}.


\section{Preliminaries}\label{prelim}

\subsection{Notation}\label{notation}

Let $I_n$ be the identity matrix in $\R^{n\times n}$. The symbol $1_n$ denotes the column vector of all ones in $\R^n$. $\Pi \defeq \frac{1}{n}1_n 1_n^\tp$ is the projection matrix onto $1_n$. We will sometimes omit subscripts when dimensions are clear from context.
Unless otherwise indicated, Greek letters denote scalar parameters, lower-case letters denote column vectors, and upper-case letters denote matrices. Exceptions include the scalars $m$ and $L$, which we use in Assumption~\ref{assumption:local_functions} to conform with convention.
The symbol $\otimes$ denotes the Kronecker matrix product.~$\norm{x}$ denotes the standard Euclidean norm of a vector~$x$, and $\norm{A}\defeq \sup_{x\ne 0} \norm{Ax}/\norm{x}$ is the spectral norm of a matrix $A$.
Unless otherwise indicated, subscripts refer to individual agents while superscripts refer to iteration count.  For brevity, we write the symmetric quadratic form $x^\tp Q x$ as $\bmat{\star}^\tp Q x$.

Define the graph $\mathcal{G}\defeq (\V,\E)$ where $\V\defeq \{1,\dots,n\}$ is the set of agents and $\E$ is the set of pairs of agents $(i,j)$ that are connected.  $\L \in \R^{n\times n}$ is a \textit{Laplacian matrix} associated with~$\mathcal{G}$ if $\L 1_n = 0$ and $\L_{ij} = 0$ if $(i,j)\notin \E$.  The \textit{spectral gap} of~$\L$ is defined as the second-smallest eigenvalue magnitude of $\L$.  Since we consider time-varying graphs, we let $\L^k$ denote a Laplacian \blue{matrix associated with} $\mathcal{G}^k$.
\blue{%
We denote a symbol on agent $i$ at iteration $k$ by $x_i^k$ along with its associated fixed point $x_i^\star$. For all such symbols, we denote their aggregation over all agents as
\[
	x^k \defeq \bmat{x_1^k \\ \vdots \\ x_n^k} \quad\text{and}\quad
	x^\star = \bmat{x_1^\star \\ \vdots \\ x_n^\star}.
\]
We denote the associated local and global error coordinates as $\tilde x_i^k \defeq x_i^k - x_i^\star$ and $\tilde x^k \defeq x^k - x^\star$, respectively.
}

\subsection{Function and Graph Assumptions}

\blue{We assume that the local function gradients satisfy the following \emph{sector bound}.}

\begin{assumption}\label{assumption:local_functions}
	Given $0<m\le L$, the the local objective functions $f_i$ are continuously differentiable and each satisfy
	\blue{%
	\[
		\bigl( \df_i(y)-\df_i(y_\text{opt}) - m\,(y-y_\text{opt})\bigr)^\tp \bigl( \df_i(y)-\df_i(y_\text{opt}) - L\,(y-y_\text{opt})\bigr) \le 0
	\]
	for all $y\in\R^d$, where $y_\text{opt}$ satisfies $\sum_{i=1}^n \df_i(y_\text{opt}) = 0$.}
\end{assumption}

\blue{%
\begin{rem}
	One way to satisfy Assumption~\ref{assumption:local_functions} is if the local functions $f_i$ are $L$-Lipschitz continuous and $m$-strongly convex, though in general, Assumption~\ref{assumption:local_functions} is much weaker. 
\end{rem}}

We define the condition ratio as $\kappa\defeq L/m$. This quantity captures how much the curvature of the objective function varies. If $f$ is twice differentiable, $\kappa$ is an upper bound on the condition number of the Hessian~$\nabla^2 f$. \blue{In general, as $\kappa\to\infty$, the functions become poorly conditioned and more difficult to optimize using first-order methods.}

The graph associated with the network of agents can change at each step of the algorithm, so we assume the following about the sequence of graph Laplacian matrices $\{\L^k\}$. 
\begin{assumption}\label{assumption:Laplacian}
	The following properties hold at each step of the algorithm.
		\begin{enumerate}
		\item The graph is connected: there always exists a path between any two nodes in $\mathcal{G}^k$. This implies that the zero eigenvalue of $\L^k$ has a multiplicity of one for all $k$.\label{it:a1} 
		\item The graph is balanced: every node has equal in-degree and out-degree. This means that $1_n^\tp \L^k = 0$ for all $k$.\label{it:a2}
		\item The spectral gap of the time-varying graph is uniformly bounded. In particular, we assume there exists $\sigma \in [0,1)$ such that $\norm{ I-\Pi-\L^k } \leq \sigma$ for all $k$.  Since the spectral radius of a matrix is always upper-bounded by its spectral norm, this implies that $\sigma$ is a uniform bound on the spectral gap of each Laplacian matrix in $\{\L^k\}$.\label{it:a3}
	\end{enumerate}
\end{assumption}

\blue{\begin{rem}\label{rem:B-connected}
The assumption that $\mathcal{G}^k$ must be connected for all $k$ is a strong assumption. Works that consider directed or time varying graphs typically make weaker assumptions, such as a \emph{joint spectrum property} or \emph{$B$-connectedness}~\cite{DIGing}. Nevertheless, our setting (which is equivalent to $B$-connectedness with $B=1$) is still weaker than assuming a constant graph. Indeed, NIDS~\cite{NIDS} converges for any $\sigma$ when the graph is constant, but in Section~\ref{sec:worst-case}, we construct a sequence of graphs that drives NIDS to instability.
\end{rem}}

\subsection{Algorithm Form}

In this paper, we consider the broad class of distributed optimization algorithms that satisfy the algebraic equations
\begin{subequations} \label{alg}
\begin{gather}
	\bmat{ x^{k+1}_i \\ y^k_i \\ z^k_i }
	= \bmat{ A & B_u & B_v \\ C_y & D_{yu} & D_{yv} \\ C_z & D_{zu} & D_{zv} }
	\bmat{ x^k_i \\ u^k_i \\ v^k_i}, \label{alg1} \\
	u^k_i = \df_i (y^k_i), \qquad
	v^k_i = \sum_{j=1}^n\L^k_{ij} z^k_j, \label{alg2}\\
	\sum_{j=1}^n \left( F_x x^k_j + F_u u^k_j \right) = 0. \label{alg3}
\end{gather}
\end{subequations}
Equation~\eqref{alg1} describes how agent $i$'s state $x_i^k$ evolves with iteration $k$. The local gradient $\grad f_i$ is evaluated at $y_i^k$ and the quantity $z_i^k$ is transmitted to neighboring agents in~\eqref{alg2}. Finally, we allow for linear state-input invariants to be enforced in~\eqref{alg3}. Such invariants typically arise from requiring a particular initialization for the algorithm.

The matrices $A$, $D_{yu}$, and $D_{zv}$ are square, and the other matrices have compatible dimensions. The dimension of $A$ is the number of local states on each agent, the dimension of $D_{yu}$ is one, and the dimension of $D_{zv}$ is the number of variables that each agent transmits with neighbors at each iteration.

\blue{%
\begin{rem}[Dimension reduction]
	To simplify notation, we assume the objective function is one-dimensional ($d=1$). We can recover the general $d$ case by replacing each scalar symbol with a $1\times d$ row vector (e.g., $u_i^k\in\R^{1\times d}$) and interpreting each local gradient $\df_i$ as a map from $\R^{1\times d}$ to $\R^{1\times d}$.
\end{rem}

\begin{rem}[Implementation]
	Not all instances of \eqref{alg} are efficiently implementable. For example, if ${D_{yu}\ne 0}$, then $y_i^k$ depends on $u_i^k$, which then depends on $y_i^k$. Such circular dependencies arise naturally in proximal algorithms, where an inner optimization problem must be solved at each iteration. For instance, given a convex differentiable $f$ and parameter $\lambda > 0$, the proximal algorithm
	\[
		x^{k+1} = \mathbf{prox}_{\lambda f}(x^k) \defeq \argmin_{x} \bigl( \lambda f(x) + \tfrac{1}{2}\norm{x-x^k}^2\bigr)
	\]
	satisfies the optimality condition $\lambda \grad f(x^{k+1}) + x^{k+1} - x^k = 0$ and can therefore be expressed in the form of~\eqref{alg} as follows:
	\begin{align*}
		x^{k+1}	&= x^k - \lambda u^k, &
		y^k     &= x^k - \lambda u^k, &
		u^k     &= \df(y^k).
		\end{align*}
	In the forthcoming analysis, we treat implementability and analysis separately. That is, we derive convergence rate bounds for general algorithms of the form~\eqref{alg}, regardless of whether they can be efficiently implemented. However, we note that a sufficient condition for avoiding circular dependencies is if the feedthrough term satisfies
	\begin{align}\label{implementable}
		\bmat{D_{yu} & D_{yv} \\ D_{zu} & D_{zv}} = \bmat{0 & D_{yv} \\ 0 & 0} \quad\text{or}\quad
		\bmat{0 & 0 \\ D_{zu} & 0}.
	\end{align}
\end{rem}

\medskip

\noindent Putting a distributed optimization algorithm into the form of~\eqref{alg} is a straightforward algebraic exercise, which we now demonstrate for two recently proposed algorithms. These algorithms are parameterized by a stepsize $\alpha$ and a gossip matrix $W$. To relate the gossip matrix to the Laplacian matrix, we set $W = I-\mu \mathcal{L}$ for some scalar $\mu\ne 0$. This provides an additional tuning parameter, and is akin to the method of successive overrelaxation used in the numerical solutions of linear systems of equations~\cite{sor_book}.

\paragraph{EXTRA.} The EXTRA algorithm~\eqref{eq:EXTRA} has a state that depends on two previous timesteps. Using the authors' recommendation of $\widetilde{W} = \tfrac{1}{2}(I+W)$ together with $W = I -\mu \L^k$, the equations become
\begin{align*}
	x^1 &= x^0 - \alpha \grad f(x^0) - \mu \L^k x^0, \\
	x^{k+2} &= 2x^{k+1}-x^k -\alpha \left( \grad f(x^{k+1}) - \grad f(x^k) \right) - \mu \L^k \left( x^{k+1} - \tfrac{1}{2} x^k \right).
\end{align*}
Define the state $(x^{k+1},x^k,\grad f(x^k))$. The outputs are now functions of the state:
$y^k \defeq x^{k+1}$ and $z^k \defeq x^{k+1}-\tfrac{1}{2}x^k$. Finally, summing across agents (left-multiplying by $1^\tp$) and using $1^\tp \L^k = 0$, we find that 
$1^\tp \left( x^{k+1} - x^k + \alpha \grad f(x^k) \right)$ is independent of $k$, and identically zero thanks to how $x^1$ is initialized. The parameters that characterize EXTRA are shown below and in Table~\ref{table::algo_params}.
\begin{align*}
\left[\begin{array}{c:c:c}
A & B_u & B_v \\ \hdashline
C_y & D_{yu} & D_{yv} \\ \hdashline
C_z & D_{zu} & D_{zv} \\ \hdashline
F_x & F_u & 
\end{array}\right]
&=
\left[\begin{array}{ccc:c:c}
	2 & -1 & \alpha & -\alpha & -\mu \\
	1 & 0 & 0 & 0 & 0 \\
	0 & 0 & 0 & 1 & 0 \\ \hdashline
	1 & 0 & 0 & 0 & 0 \\ \hdashline
	1 & -\tfrac{1}{2} & 0 & 0 & 0 \\ \hdashline
	1 & -1 & \alpha & 0 &
\end{array}\right].
\end{align*}

\paragraph{DIGing.} The DIGing algorithm~\cite{DIGing,QuLi}, is an example of a \emph{gradient tracking} algorithm. It begins with an arbitrary $x^0$ and has two update equations:
\begin{align*}
s^0 &= \grad f(x^0), \\
x^{k+1} &= W x^k -\alpha s^k, \\
s^{k+1} &= \widetilde{W}s^k  + \grad f (x^{k+1}) - \grad f(x^k).
\end{align*}
Using the authors' recommendation of $\widetilde{W} = W$, defining $W = I -\mu \L^k$ as before, and defining the state as $(x^{k},s^k,\grad f(x^k))$, we find that the output is $y^k \defeq x^{k+1}$, two quantities must be communicated between agents, $z^k \defeq (x^k, s^k)$, and the invariant is $1^\tp (s^k - \df(x^k)) = 0$.
The parameters that characterize DIGing are shown in Table~\ref{table::algo_params}.

A similar derivation can be applied to a variety of algorithms. Table~\ref{table::algo_params} summarizes the parameterizations for 8 recently proposed algorithms.
}

\begin{table*}[htb!]
	\caption{Algorithm parameters in the form of~\eqref{alg} for a variety of different distributed optimization algorithms. Algorithms can be tuned by choosing stepsize and overrelaxation parameters $\alpha$ and $\mu$, respectively. Algorithms are organized based on how many internal states they have (columns) and how many variables must be communicated in each iteration (block rows).}
	\vspace{1mm}
	\label{table::algo_params}
	\centering
	\renewcommand{\arraystretch}{1.1}%
	\setlength{\fboxsep}{1pt}
	\setlength{\fboxrule}{0pt}
	\begin{tabular}{c|p{2.7cm}c|p{2.45cm}c}
		& \multicolumn{2}{c|}{Algorithms with 2 states} & \multicolumn{2}{c}{Algorithms with 3 states} \\
	\hline
	\parbox[t]{2mm}{\multirow{2}{*}{\rotatebox[origin=c]{90}{1 communicated variable}}}
	& \parbox{2.7cm}{\small \textbf{SVL template}\\
	\textbf{(present work)}\\
	See Section~\ref{sec:algo_design}\\
	for derivation\\
	of $(\alpha,\beta,\gamma,\delta)$} & $\squeezearray\left[\begin{array}{cc:c:c}
		1 & \beta & -\alpha & -\gamma \\
		0 & 1 & 0 & -1 \\ \hdashline
		1 & 0 & 0 & -\delta \\ \hdashline
		1 & 0 & 0 & 0 \\ \hdashline
		0 & 1 & 0 & 
	\end{array}\right]$ &
	EXTRA~\cite{EXTRA} & \fbox{$\squeezearray\left[\begin{array}{ccc:c:c}
		2 & -1 & \alpha & -\alpha & -\mu \\
		1 & 0 & 0 & 0 & 0 \\
		0 & 0 & 0 & 1 & 0 \\ \hdashline
		1 & 0 & 0 & 0 & 0 \\ \hdashline
		1 & -\tfrac{1}{2} & 0 & 0 & 0 \\ \hdashline
		1 & -1 & \alpha & 0 &
	\end{array}\right]$} \\
	& \parbox{2.7cm}{Exact Diffusion\\
	(ExDIFF)\\
	\cite{ExactDiffusion1,ExactDiffusion2}} & $\squeezearray\left[\begin{array}{cc:c:c}
		2 & -1 & -\alpha & -\mu \\
		1 & 0 & -\alpha & -\tfrac12\mu \\ \hdashline
		1 & 0 & -\tfrac12 \mu & 0 \\ \hdashline
		1 & 0 & 0 & 0 \\ \hdashline
		1 & -1 & 0 & 
	\end{array}\right]$ &
	NIDS~\cite{NIDS} & \fbox{$\squeezearray\left[\begin{array}{ccc:c:c}
		2 & -1 & \alpha & -\alpha & -\mu \\
		1 & 0 & 0 & 0 & 0 \\
		0 & 0 & 0 & 1 & 0 \\ \hdashline
		1 & 0 & 0 & 0 & 0 \\ \hdashline
		1 & -\tfrac{1}{2} & \tfrac{\alpha}{2} & -\tfrac{\alpha}{2} & 0 \\ \hdashline
		1 & -1 & \alpha & 0 &
	\end{array}\right]$} \\
	\hline
	\parbox[t]{2mm}{\multirow{2}{*}{\rotatebox[origin=c]{90}{2 communicated variables}}}
	& \parbox{2.6cm}{Unified DIGing\\
	(uDIG)~\cite{Unification}} & $\squeezearray\left[\begin{array}{cc:c:cc}
		1 & -\alpha & -\alpha & -\mu & 0 \\
		0 & 1 & 0 & 0 & -\mu \\ \hdashline
		1 & 0 & 0 & 0 & 0 \\ \hdashline
		1 & 0 & 0 & 0 & 0 \\
		-\tfrac{L+m}{2} & 1 & 1 & 0 & 0\\ \hdashline
		0 & 1 & 0 & & 
	\end{array}\right]$ &
	DIGing~\cite{DIGing,QuLi} & \fbox{$\squeezearray\left[\begin{array}{ccc:c:cc}
		1 & -\alpha & 0 & 0 & -\mu & 0\\
		0 & 1 & -1  & 1 & 0 & -\mu \\
		0 & 0 & 0   & 1 & 0 & 0 \\ \hdashline
		1 & -\alpha & 0   & 0 & -\mu & 0 \\ \hdashline
		1 & 0 & 0 & 0 & 0 & 0 \\
		0 & 1 & 0 & 0 & 0 & 0 \\ \hdashline
		0 & 1 & -1 & 0 & &
	\end{array}\right]$} \\
	& \parbox{2.7cm}{Unified EXTRA\\
	(uEXTRA)~\cite{Unification}} & $\squeezearray\left[\begin{array}{cc:c:cc}
		1 & -\alpha & -\alpha & -\mu & 0 \\
		0 & 1 & 0 & 0 & -\mu \\ \hdashline
		1 & 0 & 0 & 0 & 0 \\ \hdashline
		1 & 0 & 0 & 0 & 0 \\
		-L & 1 & 1 & L\mu & 0\\ \hdashline
		0 & 1 & 0 & & 
	\end{array}\right]$ &
	AugDGM~\cite{AugDGM} & \fbox{$\squeezearray\left[\begin{array}{ccc:c:cc}
		1 & -\alpha & 0 & 0 & -\mu & \alpha\mu\\
		0 & 1 & -1      & 1 & 0 & -\mu \\
		0 & 0 & 0       & 1 & 0 & 0 \\ \hdashline
		1 & -\alpha & 0   & 0 & -\mu & \alpha\mu \\ \hdashline
		1 & 0 & 0 & 0 & 0 & 0 \\
		0 & 1 & 0 & 0 & 0 & 0 \\ \hdashline
		0 & 1 & -1 & 0 & &
	\end{array}\right]$} \\
	\hline
	\end{tabular}
\end{table*}

\subsection{Existence of a Fixed Point}

\blue{%
Not all instances of algorithm~\eqref{alg} solve the distributed optimization problem~\eqref{eq:distrop-problem}. For an algorithm to be valid, (i) there must exist a fixed point corresponding to the optimal solution, and (ii) the iterates must converge to the fixed point. We address convergence to a fixed point in our main result of Section~\ref{main_result}. In this section, however, we provide simple conditions for verifying the existence of such a fixed point.

A distributed algorithm of the form~\eqref{alg} has a fixed point $(x^\star, y^\star, z^\star, u^\star, v^\star)$ corresponding to the optimal solution of~\eqref{eq:distrop-problem} for all functions satisfying Assumption~\ref{assumption:local_functions} and all graphs satisfying Assumption~\ref{assumption:Laplacian} if the following conditions hold.
\begin{subequations}\label{eq:fixedpoint_conditions}
\begin{itemize}
	\item \textbf{Consensus and Optimality:} All agents must achieve consensus on the point at which the gradient is evaluated, and the point must be a stationary (first-order optimal) point of $f$. This means that the fixed point must satisfy $y_1^\star = \ldots = y_n^\star$ and $u_1^\star + \dots + u_n^\star = 0$, or in vector form,
	\begin{equation}\label{eq:fixedpoint_consensus_optimality}
		(I-\Pi)\, y^\star = 0 \quad\text{and}\quad
		1^\tp u^\star = 0.
	\end{equation}
	\item \textbf{Robustness to Graph:} The fixed point must not depend on the sequence of graphs $\{\L^k\}$, so $z_1^\star = \ldots = z_n^\star$ and $v_1^\star = \dots = v_n^\star = 0$, or in vector form,
	\begin{equation}
		 (I-\Pi)\, z^\star = 0 \quad\text{and}\quad
		 v^\star = 0.
	\end{equation}
	\item \textbf{Robustness to Functions:} The fixed point must satisfy $y_1^\star = \ldots = y_n^\star = y_\text{opt}$ and $u_i^\star = \df_i(y_\text{opt})$, where $y_\text{opt}$ is the optimizer of~\eqref{eq:distrop-problem}. For these to hold for any objective function $f$, we need
	\begin{equation}
		1^\tp y^\star \text{ and } (I-\Pi)\, u^\star \text{ unconstrained}.
	\end{equation}
\end{itemize}
\end{subequations}
The following proposition characterizes algorithms with such a fixed point, which we prove in Appendix~\ref{app:fixedpoint}.

\begin{prop}[Existence of fixed point]\label{prop:fixedpoint}
	An algorithm of the form~\eqref{alg} has a fixed point $(x^\star,y^\star,z^\star,u^\star,v^\star)$ that satisfies the conditions in~\eqref{eq:fixedpoint_conditions} if and only if
	\begin{subequations}\label{eq:fixedpoint}
	\begin{gather}
		\label{eq:fixedpoint1}
		\mathrm{null}(A-I) \cap \mathrm{row}(C_y) \cap \mathrm{null}(F_x) \neq \{0\}\\
		\label{eq:fixedpoint2}
		\text{and}\quad
		\bmat{ B_u \\ D_{yu} \\ D_{zu} } \in \mathrm{col}\!\left(\bmat{ A-I \\ C_y  \\ C_z }\right).
	\end{gather}
	\end{subequations}
\end{prop}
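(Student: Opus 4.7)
The plan is to turn the fixed-point problem into a linear-algebra problem by substituting the properties~\eqref{eq:fixedpoint_conditions} into the algorithm equations~\eqref{alg}, and then match the resulting algebraic conditions to \eqref{eq:fixedpoint1}--\eqref{eq:fixedpoint2}. Using $v^\star_i = 0$, $y^\star_i = y_\text{opt}$ (common), $z^\star_i = z^\star_\text{avg}$ (common), and $1^\tp u^\star = 0$, the fixed-point equations collapse to
\[
    \bmat{A-I \\ C_y \\ C_z}\, x^\star_i \;=\; \bmat{0 \\ y_\text{opt} \\ z^\star_\text{avg}} - \bmat{B_u \\ D_{yu} \\ D_{zu}}\, u^\star_i \quad \text{for every agent } i,
\]
together with the invariant $F_x \sum_i x^\star_i = 0$. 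Denote the stacked matrix on the left by $M$ and the stacked column on the right by $b_u$.

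For the forward direction, I would exploit the freedom to pick the $f_i$'s granted by Assumption~\ref{assumption:local_functions}; shifted quadratics $f_i(y) = \tfrac{L+m}{4}(y-a_i)^2$ realize any prescribed values $u^\star_i = \df_i(y_\text{opt})$ subject only to $\sum_i u^\star_i = 0$. To derive~\eqref{eq:fixedpoint2}, pick two agents with $u^\star_1 - u^\star_2$ equal to an arbitrary scalar; subtracting the corresponding stacked equations yields $M(x^\star_1 - x^\star_2) = -b_u (u^\star_1 - u^\star_2)$, so solvability forces $b_u \in \mathrm{col}(M)$. To derive~\eqref{eq:fixedpoint1}, instead take all $f_i$ identical and minimized at an arbitrary $y_\text{opt} \in \R$, so each $u^\star_i = 0$, reducing the system to $M x^\star_i = (0, y_\text{opt}, z^\star_\text{avg})^\tp$; averaging over $i$ produces $w \defeq \tfrac{1}{n}\sum_i x^\star_i$ satisfying $M w = (0, y_\text{opt}, z^\star_\text{avg})^\tp$ and $F_x w = 0$, so for any $y_\text{opt}$ there is a $w \in \mathrm{null}(A-I) \cap \mathrm{null}(F_x)$ with $C_y w = y_\text{opt}$, which forces the intersection in~\eqref{eq:fixedpoint1} to be nontrivial.

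For the converse, fix $w$ from the intersection in~\eqref{eq:fixedpoint1} (normalized so $C_y w = 1$) and $\xi$ with $M\xi = b_u$ (guaranteed by~\eqref{eq:fixedpoint2}). Given any optimization instance with optimizer $y_\text{opt}$ and local gradients $u^\star_i \defeq \df_i(y_\text{opt})$ (which sum to zero by definition of $y_\text{opt}$), define
\[
    x^\star_i \defeq y_\text{opt}\, w - \xi\, u^\star_i, \quad
    y^\star_i \defeq y_\text{opt}, \quad
    z^\star_i \defeq y_\text{opt}\, C_z w, \quad
    v^\star_i \defeq 0.
\]
A direct substitution using $(A-I)w = 0$, $C_y w = 1$, $F_x w = 0$, and $M\xi = b_u$ recovers the state equation, all three output equations, and the invariant; $v^\star = 0$ then follows from $z^\star$ lying in the consensus subspace together with $1^\tp \L^k = 0$ from Assumption~\ref{assumption:Laplacian}.

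The proof is essentially linear-algebra bookkeeping, with two places that deserve some care. First, one must verify that the shifted quadratics used in the forward direction genuinely satisfy Assumption~\ref{assumption:local_functions} while realizing arbitrary $u^\star$-assignments summing to zero; since only gradient values at $y_\text{opt}$ enter the argument and the sector bound is invariant under linear shifts, this is routine. Second, the notation $\mathrm{row}(C_y)$ in~\eqref{eq:fixedpoint1} should be read consistently with the construction as the condition that $C_y$ not vanish on the chosen direction in $\mathrm{null}(A-I) \cap \mathrm{null}(F_x)$, which is what the ``averaging'' step in the forward direction actually produces.
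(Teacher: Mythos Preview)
Your proposal is correct and follows essentially the same approach as the paper. The converse direction is virtually identical: the paper's vectors $p$ and $q$ are your $y_\text{opt}\,w$ and $\xi$, and the fixed point $x_i^\star = p - q\,\df_i(y_\text{opt})$ matches your construction exactly. For the forward direction, the paper averages over agents to get $p=\tfrac{1}{n}\sum_i x_i^\star$ in the intersection (your $w$), and then contracts against any $v\perp 1$ to obtain the column-space condition --- this is the abstract version of your ``subtract two agents'' maneuver, since both exploit that $(I-\Pi)\,u^\star$ and $1^\tp y^\star$ are unconstrained over the admissible function class. Your explicit shifted-quadratic construction is a concrete realization of what the paper phrases as ``must hold for arbitrary $v^\tp u^\star$'', and your closing remark about reading $\mathrm{row}(C_y)$ as ``$C_y$ does not vanish on the direction'' is exactly how the paper's own proof uses the condition.
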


Here, ``null'', ``col'', and ``row'' denote the nullspace, column space, and row space, respectively.
Both EXTRA and DIGing as derived above satisfy the conditions in~\eqref{eq:fixedpoint} and therefore have a fixed point corresponding to the optimal solution of~\eqref{eq:distrop-problem}.
}

\begin{rem}\label{rem:convergence}
	Proposition~\ref{prop:fixedpoint} guarantees that any instance of algorithm~\eqref{alg} \blue{satisfying~\eqref{eq:fixedpoint_conditions}} has a desirable fixed point in the presence of a time-varying graph; all agents agree on a common stationary point of~\eqref{eq:distrop-problem}. However, Proposition~\ref{prop:fixedpoint} does not ensure that the algorithm necessarily converges to this fixed point, nor does it characterize the rate of convergence. These questions will be explored in Section~\ref{main_result}. 
\end{rem}

\subsection{Lower Bounds on Worst-Case Convergence Rates}\label{sec:lower_bounds}

\blue{%
We now construct simple lower bounds on the worst-case asymptotic convergence rate of the iterates for any valid algorithm of the form~\eqref{alg}. We do so by separately considering the two specific instances discussed in Section~\ref{sec:intro}
\paragraph{Consensus} Consider the scalar local quadratic functions $f_i(y) = \tfrac{L}{2}\,(y-r_i)^2$. Then Assumption~\ref{assumption:local_functions} holds with $m=L$ and $y_\text{opt}=\tfrac{1}{n} \sum_{i=1}^n r_i$.
\paragraph{Optimization} Consider the case $n=1$. For the graph to satisfy Assumption~\ref{assumption:Laplacian}, the Laplacian matrix must be $\L^k=0$, which has spectral gap $\sigma=0$.

In both cases above, the algorithm reduces to a linear system in feedback with sector-bounded nonlinearity: in the sector $(1-\sigma,1+\sigma)$ for consensus and $(m,L)$ for optimization. Further, the linear part of the system is strictly proper (since the algorithm is implementable) and must contain an integrator (due to the fixed-point conditions). Then using the lower bound for such systems in~\cite{lowerbound}, we obtain the following.

\begin{prop}
  There does \emph{not} exist an algorithm of the form~\eqref{alg} that satisfies the implementability conditions~\eqref{implementable} and fixed-point conditions~\eqref{eq:fixedpoint} and such that, for all objective functions and Laplacian matrices satisfying Assumptions~\ref{assumption:local_functions} and~\ref{assumption:Laplacian}, there exists a constant $c>0$ such that the bound $\|x_i^k-y_\text{opt}\|\le c\,\rho_\text{lb}^k$ holds for all agents $i\in\{1,\ldots,n\}$ and all iterations $k\ge 0$, where $\rho_\text{lb} = \max\bigl\{\tfrac{\kappa-1}{\kappa+1}, \, \sigma\bigr\}$.
\end{prop}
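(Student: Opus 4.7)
The plan is to instantiate the two extremal problem instances already hinted at in the setup of the proposition and reduce the algorithm to a scalar linear feedback system with a sector-bounded uncertainty, at which point the black-box lower bound from \cite{lowerbound} applies directly. Both instances satisfy Assumptions~\ref{assumption:local_functions} and~\ref{assumption:Laplacian}, so any hypothetical universally fast algorithm must beat the bound on each of them separately, and taking the maximum yields $\rho_\text{lb}$.

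First I would treat the consensus instance with $f_i(y)=\tfrac{L}{2}(y-r_i)^2$, for which Assumption~\ref{assumption:local_functions} holds with $m=L$. Writing the closed loop in error coordinates and using linearity of the gradient, I can eliminate the internal state of~\eqref{alg} to obtain a scalar transfer function $G(z)$ from $v_i^k$ to $z_i^k$ in feedback with the time-varying gossip operator $\mathcal{L}^k$. Projecting onto the disagreement subspace $\mathrm{range}(I-\Pi)$ and substituting $\mathcal{L}^k = (I-\Pi) - \Delta^k$ with $\|\Delta^k\|\le\sigma$ (Assumption~\ref{assumption:Laplacian}\ref{it:a3}) recasts the dynamics as an LTI plant in feedback with a time-varying gain $\Delta^k$ in the sector $[-\sigma,\sigma]$. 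The implementability condition~\eqref{implementable} ensures strict properness of $G$, and the fixed-point condition~\eqref{eq:fixedpoint1} forces an eigenvalue of $A$ at $1$ that is visible along $C_z$, supplying the required integrator. The lower bound in~\cite{lowerbound} then rules out any worst-case rate below $\sigma$.

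Next I would handle the optimization instance by taking $n=1$, which forces $\mathcal{L}^k=0$ and $\sigma=0$, so~\eqref{alg} collapses to a centralized iteration. After eliminating the state, we again obtain a strictly proper scalar LTI plant with an integrator, now placed in feedback with the gradient $\nabla f$, which is sector-bounded in $[m,L]$ by Assumption~\ref{assumption:local_functions}. Applying the same lower bound result of~\cite{lowerbound} in its classical incarnation yields $(\kappa-1)/(\kappa+1)$. Combining the two cases, any algorithm that converges at a rate better than $\rho_\text{lb}$ uniformly over the assumption class would violate one of these two lower bounds, giving the contradiction.

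The main technical obstacle is rigorously verifying that the reductions fit the hypotheses of~\cite{lowerbound}: namely, that (i) after projecting out the consensus direction the effective plant is genuinely single-input single-output and strictly proper, (ii) the integrator survives the projection (this uses \eqref{eq:fixedpoint} to ensure the $A$-eigenvector at $1$ aligns with the relevant output direction rather than being annihilated by $C_z$), and (iii) the adversarial $\Delta^k$ prescribed by the lower bound is realizable as $(I-\Pi)-\mathcal{L}^k$ for some admissible Laplacian. For (iii) it suffices to restrict to $d=1$ and $n=2$, where $\mathcal{L}^k = \tfrac{1-\lambda_k}{2}\bigl[\begin{smallmatrix} 1 & -1 \\ -1 & 1\end{smallmatrix}\bigr]$ is balanced, connected, and satisfies $\|I-\Pi-\mathcal{L}^k\|=|\lambda_k|$ for any $|\lambda_k|\le\sigma$, so the adversary can choose any scalar sector element at each step.
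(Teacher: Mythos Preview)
Your proposal is correct and follows essentially the same approach as the paper: reduce to the consensus and single-agent cases, observe that in each case the algorithm becomes a strictly proper LTI plant (from~\eqref{implementable}) containing an integrator (from~\eqref{eq:fixedpoint}) in feedback with a sector-bounded operator, and invoke the lower bound from~\cite{lowerbound}. The only cosmetic difference is that the paper places the full Laplacian $\mathcal{L}^k$ in the feedback loop and quotes the sector $(1-\sigma,1+\sigma)$, whereas you split off the nominal part and put only $\Delta^k$ in the loop with sector $[-\sigma,\sigma]$; these are related by a standard loop shift, and the paper's formulation fits the hypotheses of~\cite{lowerbound} (which require a positive sector) more directly.
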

\begin{rem}[Accelerated rates]\label{rem:no_free_lunch}
	These lower bounds, which are achieved by ordinary gradient descent, imply that accelerated algorithms such as the recently proposed SSDA~\cite{Scaman17} or distributed versions of heavy-ball~\cite{DHB} or Nesterov acceleration~\cite{QuLi_accelerated,ABN} do not in fact achieve accelerated rates in the worst case in our time-varying setting.
\end{rem}
}


\section{Main Result}\label{main_result}
Our main theorem, Theorem~\ref{thm:main-result}, consists of a small convex semidefinite program (SDP) whose feasibility guarantees the linear convergence of a distributed algorithm in the form of~\blue{\eqref{alg}}. The algorithm parameters, problem data $(\kappa, \sigma)$, and candidate linear rate $\rho$ all appear as parameters in the SDP. Furthermore, the SDP has a fixed size that does not depend on $n$ (the number of agents) or $d$ (the dimension of the domain of $f$) and can thus be efficiently solved using a variety of established solvers.

\blue{%
\begin{thm}[Analysis result]\label{thm:main-result}
	Consider the distributed optimization problem~\eqref{eq:distrop-problem} solved using algorithm~\eqref{alg}. Suppose Assumptions~\ref{assumption:local_functions} and \ref{assumption:Laplacian} hold and further assume the algorithm satisfies the fixed point conditions~\eqref{eq:fixedpoint}. Define the matrices
	\begin{gather*}
		M_0 \defeq \bmat{-2mL & L+m \\ L+m & -2} \quad\text{and}\quad
		M_1 \defeq \bmat{\sigma^2-1 & 1 \\ 1 & -1}.
	\end{gather*}
	Let $\Psi$ be a matrix whose columns form a basis for the nullspace of $\bmat{ F_x & F_u}$. If there exist $P \succ 0$, $Q\succ 0$, and $R\succeq 0$ of appropriate sizes such that	
	\begin{subequations}\label{eq:sdp-main}
		\begin{align}
		\Psi^\tp
		\left[\begin{array}{cc}
			A & B_u\\ I & 0 \\ \hdashline
			C_y & D_{yu}\\ 0 & I
		\end{array}\right]^\tp
		\left[\begin{array}{cc:c}
			P & 0 & 0 \\
			0 & -\rho^2 P & 0 \\ \hdashline
			0 & 0 & M_0
		\end{array}\right]
		\left[\begin{array}{cc}
			A & B_u\\ I & 0 \\ \hdashline
			C_y & D_{yu}\\ 0 & I
		\end{array}\right]
		\Psi &\preceq 0 \label{eq:lb-rho} \\
		\left[\begin{array}{ccc}
			A & B_u & B_v \\
			I & 0 & 0 \\ \hdashline
			C_y & D_{yu} & D_{yv} \\
			0 & I & 0 \\ \hdashline
			C_z & D_{zu} & D_{zv} \\
			0 & 0 & I
		\end{array}\right]^\tp
		\left[\begin{array}{cc:c:c}
			Q & 0 & 0 & 0 \\
			0 & -\rho^2 Q & 0 & 0 \\ \hdashline
			0 & 0 & M_0 & 0 \\ \hdashline
			0 & 0 & 0 & M_1\otimes R
		\end{array}\right]
		\left[\begin{array}{ccc}
			A & B_u & B_v \\
			I & 0 & 0 \\ \hdashline
			C_y & D_{yu} & D_{yv} \\
			0 & I & 0 \\ \hdashline
			C_z & D_{zu} & D_{zv} \\
			0 & 0 & I
		\end{array}\right]
			&\preceq 0
		\label{eq:lmi-dis}
	\end{align}
	\end{subequations}
	then there exists a constant $c>0$ independent of $i$ and $k$ such that for all agents $i\in\{1,\dots,n\}$ and all iterations $k\ge 0$,
	\begin{equation}\label{eq:bound}
		\norm{ x_i^k - x_i^\star } \le c\,\rho^k
	\end{equation}
	for some fixed point $(x_i^\star, y_i^\star, z_i^\star, u_i^\star, v_i^\star)$ that satisfies~\eqref{eq:fixedpoint_conditions}.
\end{thm}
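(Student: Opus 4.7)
\emph{Step 1: Error coordinates and orthogonal splitting.} The plan is to certify linear convergence via a combined Lyapunov function $V^k = n\,V_c^k + V_d^k$, where $V_c^k = \bar{\tilde x}^{k\tp} P\,\bar{\tilde x}^k$ measures the consensus (average) error and $V_d^k = \trace(\tilde X^{k\tp}_\perp\,Q\,\tilde X^k_\perp)$ measures the disagreement. First, subtract the fixed point guaranteed by Proposition~\ref{prop:fixedpoint} to obtain error coordinates $\tilde x_i^k$. Using $\Pi = \tfrac{1}{n}\1\1^\tp$, split each stacked error into its agent-wise average $\bar{\tilde x}^k$ and its disagreement matrix $\tilde X^k_\perp = \tilde X^k(I-\Pi)$. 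Two facts follow: (i) because $\1^\tp\L^k = 0$, we have $\1^\tp v^k = 0$, so the consensus-projected dynamics lose their graph term and reduce to $\bar{\tilde x}^{k+1} = A\,\bar{\tilde x}^k + B_u\,\bar{\tilde u}^k$; (ii) summing the invariant~\eqref{alg3} over agents gives $F_x\,\bar{\tilde x}^k + F_u\,\bar{\tilde u}^k = 0$, so $[\bar{\tilde x}^k;\,\bar{\tilde u}^k]\in\mathrm{col}(\Psi)$.

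\emph{Step 2: Two dissipation inequalities.} Pre- and post-multiplying LMI~\eqref{eq:lb-rho} by the consensus vector (admissible since $[\bar{\tilde x}^k;\,\bar{\tilde u}^k]\in\mathrm{col}(\Psi)$) yields $V_c^{k+1} - \rho^2 V_c^k + [\bar{\tilde y}^k;\,\bar{\tilde u}^k]^\tp M_0\,[\bar{\tilde y}^k;\,\bar{\tilde u}^k]\le 0$. Applying LMI~\eqref{eq:lmi-dis} column-by-column to the disagreement matrices $[\tilde X^k_\perp;\,\tilde U^k_\perp;\,V^k]$ and summing over agents yields $V_d^{k+1} - \rho^2 V_d^k + \sum_j [\tilde y^k_{\perp,j};\,\tilde u^k_{\perp,j}]^\tp M_0\,[\tilde y^k_{\perp,j};\,\tilde u^k_{\perp,j}] + G^k \le 0$, where $G^k \mathrel{:=} (\sigma^2-1)\trace(R\,Z^k_\perp Z^{k\tp}_\perp) + 2\trace(R\,Z^k_\perp V^{k\tp}) - \trace(R\,V^k V^{k\tp})$ is the graph term arising from the $M_1\otimes R$ block.

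\emph{Step 3: Closing via the IQCs.} Using $V^k = Z^k_\perp(\L^k)^\tp$, the spectral-gap bound $\|I-\Pi-\L^k\|\le\sigma$, and cyclicity of trace, a short manipulation shows $G^k\ge 0$ for every $R\succeq 0$. Adding $n$ times the consensus inequality to the disagreement inequality, the componentwise identity $\sum_i a_i b_i = n\bar a\bar b + \sum_i(a_i-\bar a)(b_i-\bar b)$ collapses the two $M_0$ contributions into $\sum_i [\tilde y_i^k;\,\tilde u_i^k]^\tp M_0\,[\tilde y_i^k;\,\tilde u_i^k]$, which is $\ge 0$ by Assumption~\ref{assumption:local_functions}. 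Discarding these two nonnegative pieces leaves $V^{k+1}\le \rho^2 V^k$. Iterating and invoking $P,Q\succ 0$ to bound $V^k$ below by a multiple of $\|\tilde x^k\|^2$ gives the claimed rate~\eqref{eq:bound}.

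\emph{Main obstacle.} The subtle step is that the consensus and disagreement halves of the sector inequality each lack a definite sign in isolation; only their $n$-weighted sum is guaranteed nonnegative by the per-agent bounds. Choosing the combined Lyapunov function $V^k = n\,V_c^k + V_d^k$ (rather than $V_c^k + V_d^k$) is precisely what makes the two ``scalar'' LMIs compose into a single convergence certificate, and reconciling this Kronecker-structured agent-wise dynamics with the scalar LMI formulation (including correctly carrying the multiplier $R$ through the trace manipulations in $G^k$) is the bulk of the technical work.
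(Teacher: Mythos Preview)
Your proposal is correct and follows essentially the same route as the paper's proof. The paper also splits into consensus and disagreement components, applies~\eqref{eq:lb-rho} to the averaged error (using the $\Psi$-nullspace constraint) and~\eqref{eq:lmi-dis} to the disagreement part, then sums and discards the two nonnegative IQC terms coming from Assumptions~\ref{assumption:local_functions} and~\ref{assumption:Laplacian}. The only cosmetic difference is that the paper projects onto the disagreement subspace via an orthonormal basis $w_2,\dots,w_n$ for $\mathbf{1}^\perp$ rather than your column-by-column trace formulation, but since $\sum_{\ell=2}^n w_\ell w_\ell^\tp = I-\Pi$ the two computations are identical; your ``add $n$ times the consensus inequality'' is likewise just the paper's $\Pi\otimes P$ scaling written out. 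One small wording slip: the invariant~\eqref{alg3} is already a sum over agents, so you are not ``summing'' it but rather subtracting the fixed-point copy to conclude $[\bar{\tilde x}^k;\bar{\tilde u}^k]\in\mathrm{col}(\Psi)$.
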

}

\bigskip

\blue{%
For fixed algorithm parameters $A,B_u,B_v,C_y,C_z,D_{yu}$, $D_{yv},D_{zu},D_{zv},F_x,F_u$, function parameters $m$ and $L$, graph parameter $\sigma$, and candidate rate $\rho$, the SDP~\eqref{eq:sdp-main} is a linear matrix inequality (LMI) in the variables $(P,Q,R)$, and therefore convex. Indeed, \eqref{eq:lb-rho} and \eqref{eq:lmi-dis} are decoupled and their feasibility may be checked separately.}
To find the best (smallest) upper bound, we observe that feasibility of~\eqref{eq:sdp-main} for some $\rho_0$ implies feasibility for all $\rho \ge \rho_0$. A bisection search on $\rho$ is then guaranteed to find the minimal $\rho$, even though~\eqref{eq:sdp-main} is not jointly convex in $(P,Q,R,\rho)$. \blue{While our result is only a sufficient condition for convergence, we provide empirical evidence in Section~\ref{sec:worst-case} that suggests that it is in fact tight.}

\blue{%
\begin{rem}\label{rem:bounds}
	Our main theorem provides conditions under which the \textit{state} converges to a fixed point linearly with rate~$\rho$. However, when the algorithm also satisfies the conditions in~\eqref{implementable} for being efficiently implementable, then under the conditions of Theorem~\ref{thm:main-result}, there exist constants $c_u$, $c_v$, $c_y$, and $c_z$ such that for all agents $i$ and all iterations $k$,
	\begin{align*}
		\|u_i^k - u_i^\star\| &\le c_u\,\rho^k, &
		\|y_i^k - y_i^\star\| &\le c_y\,\rho^k, &
		\|v_i^k - v_i^\star\| &\le c_v\,\rho^k, &
		\|z_i^k - z_i^\star\| &\le c_z\,\rho^k,
	\end{align*}
	for some fixed point $(x_i^\star, y_i^\star, z_i^\star, u_i^\star, v_i^\star)$ that satisfies~\eqref{eq:fixedpoint_conditions}. In particular, the output sequence $y_i^k$ of each agent converges to the optimizer $y_\text{opt}$ of~\eqref{eq:distrop-problem} linearly with rate $\rho$.
\end{rem}}

The core idea behind Theorem~\ref{thm:main-result} is to posit a quadratic Lyapunov candidate of the form
\blue{%
\begin{align}\label{eq:V}
V^k\defeq (x^k - x^\star)^\tp \bigl( \Pi\otimes P + (I-\Pi)\otimes Q\bigr) (x^k - x^\star)
\end{align}
for some appropriate choice of $P,Q \succ 0$.}
Feasibility of~\eqref{eq:sdp-main} can be shown to imply $V^{k+1}\leq \rho^2 V^k$, which ensures linear convergence of the distributed optimization algorithm when~$\rho<1$.  A preliminary (and less concise) version of Theorem~\ref{thm:main-result} appeared in~\cite{sundararajan_allerton}. \blue{The proof of Theorem~\ref{thm:main-result} is given in Appendix~\ref{proof_main}.}


\section{Algorithm Design}\label{sec:algo_design}

\blue{We now use Theorem~\ref{thm:main-result} to design a distributed optimization algorithm, which we name SVL. Our guiding principle is to seek the fastest possible rate bound guarantee while keeping the algorithm as simple as possible. Therefore, we seek an algorithm with two states that only requires one state to be communicated at every timestep. Inspired by our previous work in which we developed a canonical form for distributed algorithms over time-invariant graphs~\cite{canform}, we restrict our search to algorithms of the form~\eqref{alg} with
\begin{align}\label{eq:SVL-template}
	\left[\begin{array}{c:c:c}
	A & B_u & B_v \\ \hdashline
	C_y & D_{yu} & D_{yv} \\ \hdashline
	C_z & D_{zu} & D_{zv} \\ \hdashline
	F_x & F_u &
	\end{array}\right]
	&=
	\left[\begin{array}{cc:c:c}
		1 & \beta & -\alpha & -\gamma \\
		0 & 1 & 0 & -1 \\ \hdashline
		1 & 0 & 0 & -\delta \\ \hdashline
		1 & 0 & 0 & 0 \\ \hdashline
		0 & 1 & 0 & 
	\end{array}\right].
\end{align}
As long as $\beta \ne 0$, this algorithm satisfies the fixed point conditions of Proposition~\ref{prop:fixedpoint}. Moreover, the update equations satisfy~\eqref{implementable} and therefore do not contain circular dependencies, so we can implement the algorithm in a straightforward fashion as in Algorithm~\ref{alg:canonical}. To motivate the structure of our algorithm, we show how it corresponds to an inexact version of the alternating direction method of multipliers (ADMM), as well as how it reduces to well-known consensus and optimization algorithms in special cases. But first, we show how to use the SDP~\eqref{eq:sdp-main} to choose the algorithm parameters.

\begin{algorithm}[htb]
	\small\caption{\blue{(template for the SVL algorithm)}}\label{alg:canonical}
	\begin{algorithmic}
		\blue{
		\State\textbf{Initialization:}~Let $\L^k \in\R^{n\times n}$ be a Laplacian matrix. \blue{Agents $i\in\{1,\ldots,n\}$ choose initial local state $x_i^0 \in \R^d$ arbitrarily and $w_i^0\in\R^d$ such that $\sum_{i=1}^{n}w_i^0=0$ (e.g. $w_i^0=0$).}
		\For {iteration $k=0,1,2,\ldots$}
		\For {agent $i\in\{1,\ldots,n\}$}
		\State \textbf{Local communication}
		\State $~~\hphantom{w_i^{k+1}}\mathllap{v_{i}^k} = \sum_{j=1}^n \L^k_{ij}\,x_j^k \hfill \text{(C.1)}$
		\State \textbf{Local gradient computation}
		\State $~~\hphantom{w_i^{k+1}}\mathllap{y_i^k} = x_i^k - \delta\,v_{i}^k \hfill \text{(C.2)}$
		\State $~~\hphantom{w_i^{k+1}}\mathllap{u_i^k} = \df_i(y_i^k) \hfill \text{(C.3)}$    
		\State \textbf{Local state update}
		\State $~~\hphantom{w_i^{k+1}}\mathllap{x_i^{k+1}} = x_i^k + \beta\,w_i^k - \alpha\,u_i^k - \gamma\, v_{i}^k \hfill \text{(C.4)}$
		\State $~~\hphantom{w_i^{k+1}}\mathllap{w_i^{k+1}} = w_i^k - v_{i}^k \hfill \text{(C.5)}$
		\EndFor
		\EndFor
		}
	\end{algorithmic}
\end{algorithm}
}

\subsection{Choosing the Algorithm Parameters}

The problem of minimizing the worst-case convergence rate~$\rho$ over the algorithm parameters $(\alpha,\beta,\gamma,\delta)$ and SDP solution \blue{$(P,Q,R)$} subject to the SDP being feasible is difficult due to the nonlinear matrix \blue{inequalities~\eqref{eq:sdp-main}}. Instead, we show that for a particular choice of $(\alpha,\gamma,\delta)$, the remaining parameters $(\beta,\rho)$ can be chosen such that the \blue{SDP is feasible, where the} matrix in~\eqref{eq:lmi-dis} is rank one. 
We have performed extensive numerical optimizations of the SDP, suggesting that the optimal parameters do in fact have this structure. We now state our main design result, which describes \blue{the convergence rate of} the SVL algorithm.  We prove the result in Appendix~\ref{sec:SVL-proof}.

\begin{thm}[SVL]\label{thm:SVL}
  Consider applying Algorithm~\ref{alg:canonical} to the distributed optimization problem~\eqref{eq:distrop-problem}, and suppose Assumptions~\ref{assumption:local_functions} and \ref{assumption:Laplacian} hold \blue{with $0<m<L$ and $0\le\sigma < 1$}. Define $\eta\defeq 1+\rho-\kappa\,(1-\rho)$ and choose the parameters
  \begin{align}\label{eq:paramopt}
  \alpha &= \frac{1-\rho}{m}, &
  \gamma &= 1+\beta, &
  \delta &= 1,
  \end{align}
  where $\beta$ and $\blue{\rho\in\bigl[\tfrac{L-m}{L+m},1\bigr)}$ satisfy the constraints
  \begin{subequations}\label{eq:sol-opt}
  \begin{align}
  \bigl(2\beta-(1-\rho)(\kappa+1)\bigr)(\beta-1+\rho^2) &< 0, \label{eq:beta-constraint}\\
  \rho^2\,\bbbl(\frac{\beta-1+\rho^2}{\beta-1+\rho}\bbbr)
    \bbbl(\frac{2-\eta-2\beta}{2\rho^2\beta - (1-\rho^2)\eta}\bbbr) 
    \bbbl(\frac{(2\rho^2+\eta)\beta - (1-\rho^2)\eta}{(1+\rho)(\eta-2\eta\rho+2\rho^2)-(2\rho^2+\eta)\beta}\bbbr) &= \sigma^2. \label{eq:sigopt}
  \end{align}
  \end{subequations}
  Then there exists a constant $c>0$ independent of $i$ and $k$ such that for all agents $i\in\{1,\dots,n\}$ and all iterations $k\ge 0$,
$\norm{ y_i^k - y_\text{opt} } \le c\,\rho^k $
  where $y_\text{opt}\in\R^d$ is the optimizer of~\eqref{eq:distrop-problem}.
\end{thm}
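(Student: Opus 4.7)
The plan is to apply Theorem~\ref{thm:main-result} directly: it suffices to exhibit $P\succ 0$, $Q\succ 0$, and $R\ge 0$ satisfying the LMIs~\eqref{eq:lb-rho} and~\eqref{eq:lmi-dis} for the SVL template~\eqref{eq:SVL-template} with $\alpha=(1-\rho)/m$, $\gamma=1+\beta$, $\delta=1$. After substituting these choices, each LMI becomes a small, explicit matrix inequality in the scalars $\rho, \beta, m, L, \sigma$ together with the free Lyapunov matrices. Since the template has feedthrough pattern $\bmat{0 & -1 \\ 0 & 0}$, it matches the implementability condition~\eqref{implementable}, so once state convergence at rate $\rho$ is certified, Remark~\ref{rem:bounds} promotes it to the claimed output bound $\|y_i^k - y_\text{opt}\|\le c\rho^k$.

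For LMI~\eqref{eq:lb-rho}, the invariant $F_x=\bmat{0 & 1}$, $F_u=0$ allows the choice $\Psi=\bmat{1 & 0 \\ 0 & 0 \\ 0 & 1}$, which annihilates the second state coordinate. After this projection, the dynamics in the remaining coordinate collapse to the scalar gradient descent iteration with stepsize $\alpha=(1-\rho)/m$, and the LMI becomes the standard Lyapunov certificate for gradient descent under the sector bound $M_0$. It is well known (and routine to verify directly) that such a certificate exists precisely when $\rho \ge (L-m)/(L+m)$, matching the lower endpoint of the stated range of $\rho$; a suitably chosen (nearly diagonal) $P\succ 0$ does the job.

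LMI~\eqref{eq:lmi-dis} is the core of the argument. Following the hint in the text, I would seek $(Q,R)$ for which the left-hand side has rank one, i.e., equals $-vv^\tp$ for some explicit vector $v$ built from $\beta, \rho, \kappa$. Under this ansatz, the matrix inequality reduces to a finite set of scalar equalities and inequalities. The inequality~\eqref{eq:beta-constraint} is expected to emerge as the condition ensuring $Q\succ 0$ (with $R\ge 0$), while the rank-one cancellation identity -- after clearing denominators and collecting terms -- will produce the rational equation~\eqref{eq:sigopt} relating $\sigma^2$ to the remaining parameters. The specific choices $\gamma=1+\beta$, $\delta=1$, $\alpha=(1-\rho)/m$ are precisely what make this factorization tractable.

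The main obstacle is identifying the correct rank-one ansatz for $Q$ and $R$ and carrying out the resulting algebra: the LMI is not jointly convex in the algorithm parameters, and without structural insight it is a genuinely nonlinear matrix inequality. Once the ansatz is fixed, verification reduces to symbolic manipulation, but correctly parametrizing $v$ and simplifying the resulting rational expressions into exactly the form~\eqref{eq:sigopt} will require careful bookkeeping. With both LMIs certified, Theorem~\ref{thm:main-result} combined with Remark~\ref{rem:bounds} immediately delivers the desired linear rate on $y_i^k$.
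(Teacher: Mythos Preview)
Your proposal is correct and follows essentially the same approach as the paper: reduce LMI~\eqref{eq:lb-rho} via $\Psi$ to the scalar gradient-descent certificate (the paper gives the explicit value $P_{11}=\tfrac{m(L-m)}{\rho(1-\rho)}$), then exhibit explicit $Q$ and $R$ making the matrix in~\eqref{eq:lmi-dis} rank one, with~\eqref{eq:beta-constraint} ensuring the positivity conditions and~\eqref{eq:sigopt} arising from the rank-one identity; the paper carries out exactly this computation with concrete formulas for $Q$, $R$, and the vector $z$ (your $v$) in terms of auxiliary quantities $t_1,\ldots,t_6$. The only thing you leave implicit that the paper makes explicit is the specific parametrization of $Q$, $R$, and the rank-one factor, but your plan is the right one.
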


Theorem~\ref{thm:SVL} provides conditions on parameters $(\alpha,\beta,\gamma,\delta)$ of Algorithm~\ref{alg:canonical} such that the algorithm converges with rate at least $\rho$. The theorem, however, does not address the problem of optimizing the convergence rate since $\beta$ and $\rho$ must only be chosen to satisfy the constraints~\eqref{eq:sol-opt}. This is because the optimal parameters do not admit a closed-form solution for the convergence rate $\rho$ as a function of the spectral gap $\sigma$ and function parameters $m$ and $L$. However, we now provide a systematic method for computing the optimal parameters.

The parameters must satisfy~\eqref{eq:sigopt}, but this equation does not have a closed-form solution for $\rho$. Instead, we consider fixing the \blue{rate~$\rho$} and maximizing the corresponding spectral gap. We can then choose $\beta$ to maximize $\sigma^2$ in~\eqref{eq:sigopt}. Setting the derivative equal to zero, we find that the value of $\beta$ which maximizes $\sigma^2$ for a fixed convergence rate $\rho$ satisfies
\[
\frac{\textrm{d}\sigma^2}{\textrm{d}\beta}=0 \quad\implies\quad 
\bigl(\beta\bigl(1-\kappa+2\rho(1+\rho)\bigr)-\eta(1-\rho^2)\bigr)
\bigl(s_0 + s_1\beta + s_2\beta^2 + s_3\beta^3\bigr)=0,
\]
where the coefficients $s_i$ are given by
\begin{align*}
s_0 &\defeq \eta\,\bigl(1-\rho^2\bigr)^2 \bigl(\eta-(3-\eta)\eta\rho+2(1-\eta)\rho^2+2\rho^3\bigr), \\
s_1 &\defeq -\bigl(1-\rho^2\bigr) \Bigl(\eta^3\rho+4\rho^5-2\eta\rho^2(2\rho^2+\rho-3)+\eta^2\,\bigl(4 \rho^3-4 \rho^2-6 \rho+3\bigr)\Bigr), \\
s_2 &\defeq 3\eta(1-\rho)^2 (1+\rho) (2\rho^2+\eta), \\
s_3 &\defeq (2\rho^2+\eta) (2\rho^3-\eta).
\end{align*}
Solving the first factor for~$\beta$, we find that it does not satisfy the inequality~\eqref{eq:beta-constraint} and is therefore not a valid solution. The optimal $\beta$ must then make the second factor zero. Therefore, we can do a bisection search over $\rho$, where at each iteration of the bisection search we solve the cubic equation
\begin{align}\label{eq:cubic}
s_0 + s_1\beta + s_2\beta^2 + s_3\beta^3 = 0
\end{align}
to find the unique real solution $\beta$ that satisfies~\eqref{eq:beta-constraint}. Substituting this value for $\beta$ into~\eqref{eq:sigopt} we can solve for~$\sigma$. If this value is less than $\sigma$, we increase~$\rho$; otherwise, we decrease~$\rho$. We then repeat this procedure until~$\sigma$ is sufficiently close to the spectral gap. We summarize this procedure for finding the parameters $\beta$ and $\rho$ that optimize the worst-case convergence rate in Algorithm~\ref{alg:paramopt}; we refer to Algorithm~\ref{alg:canonical} using these parameters along with those in~\eqref{eq:paramopt} as SVL.

\begin{algorithm}[htb]
	\small\caption{(computing the SVL parameters)}\label{alg:paramopt}
	\begin{algorithmic}
		\State\textbf{Initialization:}~Let $0<m<L$, $0\le\sigma<1$, and $\epsilon>0$. Define $\kappa\defeq L/m$. Set $\rho_1=0$ and $\rho_2=1$.
		\While {$\rho_2-\rho_1 > \epsilon$}
		\State $\rho = (\rho_1+\rho_2)/2$
		\State Let $\beta$ be the unique real solution to~\eqref{eq:cubic} that satisfies~\eqref{eq:beta-constraint}.
		\State Using this value of $\beta$, let $\hat{\sigma}$ denote the solution to~\eqref{eq:sigopt}.
		\If {$\hat{\sigma} < \sigma$}
		  \State $\rho_1 = \rho$
		\Else
		  \State $\rho_2 = \rho$
		\EndIf
		\EndWhile \\
		\Return $\rho,\beta$
	\end{algorithmic}
\end{algorithm}

Using this procedure for computing the worst-case convergence rate of SVL, Fig.~\ref{fig:optalg} displays~$\rho$ as a function of the spectral gap~$\sigma$ and the centralized gradient rate $\tfrac{\kappa-1}{\kappa+1}$. One of the remarkable aspects of the SVL algorithm is that it actually achieves the same worst-case convergence rate as \emph{centralized} gradient descent if the spectral gap is sufficiently small. In this case, there is sufficient mixing among the agents so that the convergence rate is limited by the difficulty of the optimization problem and not the problem of having agents agree on the solution (i.e., consensus). This corresponds to the horizontal lines for small values of $\sigma$ in the \blue{top} panel of Fig.~\ref{fig:optalg}. Viewed another way, the convergence rate is limited by the difficulty of the optimization problem when the problem is ill-conditioned (i.e., $\kappa$ is large), which corresponds to the curves approaching the straight line at $\rho=\tfrac{\kappa-1}{\kappa+1}$ in the \blue{bottom} panel of Fig.~\ref{fig:optalg}.

\blue{%
\begin{rem}[Optimality]
  We conjecture that the SVL parameters $(\alpha,\beta,\gamma,\delta)$ produce the fastest worst-case convergence rate over all algorithms in the form of Algorithm~\ref{alg:canonical} that is certifiable using Theorem~\ref{thm:main-result}. However, we make no formal claims of optimality of the SVL algorithm in this paper.
\end{rem}
}

\begin{figure}[tbh]
	\centering
	\includegraphics[scale=0.90]{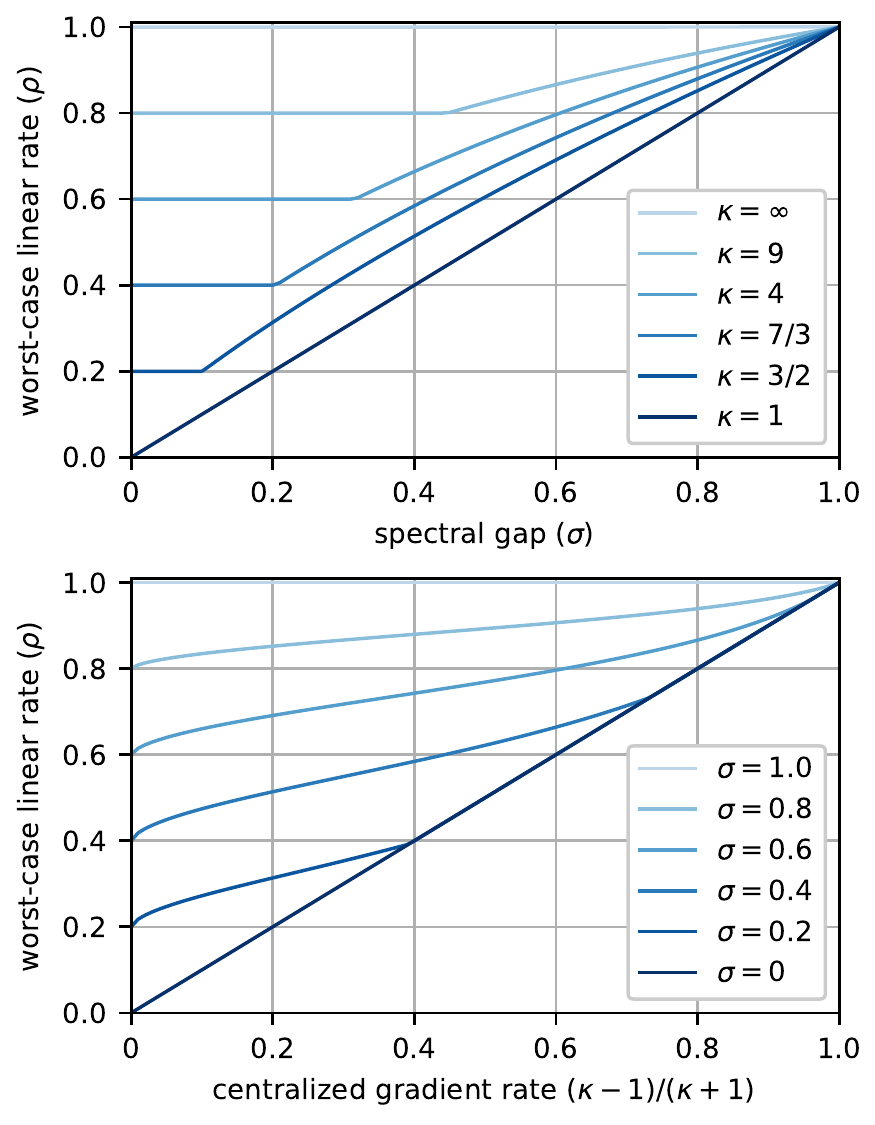}
	\caption{Worst-case \blue{linear rate $\rho$} of SVL in Theorem~\ref{thm:SVL} \blue{as a function of $\kappa$ and $\sigma$. Top plot: as $\kappa\to 1$ (quadratic objective), we obtain $\rho = \sigma$ (optimal linear consensus rate). Bottom plot: as $\sigma\to 0$ (fully connected graph), we obtain $\rho = \tfrac{\kappa-1}{\kappa+1}$ (optimal centralized gradient rate).} }\label{fig:optalg}
\end{figure}

\subsection{Interpretation of SVL as Inexact ADMM}\label{sec:admm}

To motivate the structure of SVL, we show how SVL can be interpreted as an inexact version of the alternating direction method of multipliers (ADMM). Using the formulation in~\cite[Section 7.1]{ADMM}, the problem~\eqref{eq:distrop-problem} can be solved using ADMM:
\begin{subequations}
\begin{align}
x_i^{k+1} &= \argmin_{x} f_{i}(x)+(x-y_i^k)^\tp z_i^k+\tfrac{\beta}{2} \norm{x-y_i^k}^2 \label{eq:admm1} \\
y_i^{k+1} &= \frac{1}{n} \sum_{j=1}^n x_j^{k+1} \label{eq:admm2} \\
z_i^{k+1} &= z_i^k + \beta\,(x_i^{k+1}-y_i^{k+1}) \label{eq:admm3}
\end{align}
\end{subequations}
where $(x_i^k,y_i^k,z_i^k)$ are the variables associated with agent $i$ at time $k$, and $\beta$ is the ADMM parameter. To implement this algorithm, however, each agent must solve the local optimization problem~\eqref{eq:admm1} \emph{exactly} as well as compute the \emph{exact} average~\eqref{eq:admm2} at each iteration. Instead, we consider a variant where the computations and communications are \emph{inexact}. Specifically, we replace the exact minimization~\eqref{eq:admm1} with a single gradient step with initial condition $y_i^k$ and stepsize $\alpha>0$, and we replace the exact averaging step~\eqref{eq:admm2} with a single gossip step using the  Laplacian matrix $\L^k$. This gives the following inexact version of ADMM:
\begin{align*}
x_i^{k+1} &= y_i^k - \alpha\,\bigl(\df(y_i^k)+z_i^k\bigr) \\
y_i^{k+1} &= x_i^{k+1} - \sum_{j=1}^n \L_{ij}^{k+1}\,x_j^{k+1} \\
z_i^{k+1} &= z_i^k + \beta\,(x_i^{k+1}-y_i^{k+1})
\end{align*}
Defining the state $w_i^k \defeq -\tfrac{\alpha}{\beta} z_i^{k-1}$, this algorithm is equivalent to Algorithm~\ref{alg:canonical} with $\gamma=1+\beta$ and $\delta=1$. In other words, SVL corresponds to an inexact version of ADMM, where $\alpha$ is the stepsize of the gradient step and $\beta$ is the ADMM parameter. See~\cite{DADMM1,DADMM2} for other distributed ADMM variants.

\begin{figure*}[thb]
	\centering
	\includegraphics[scale=0.90]{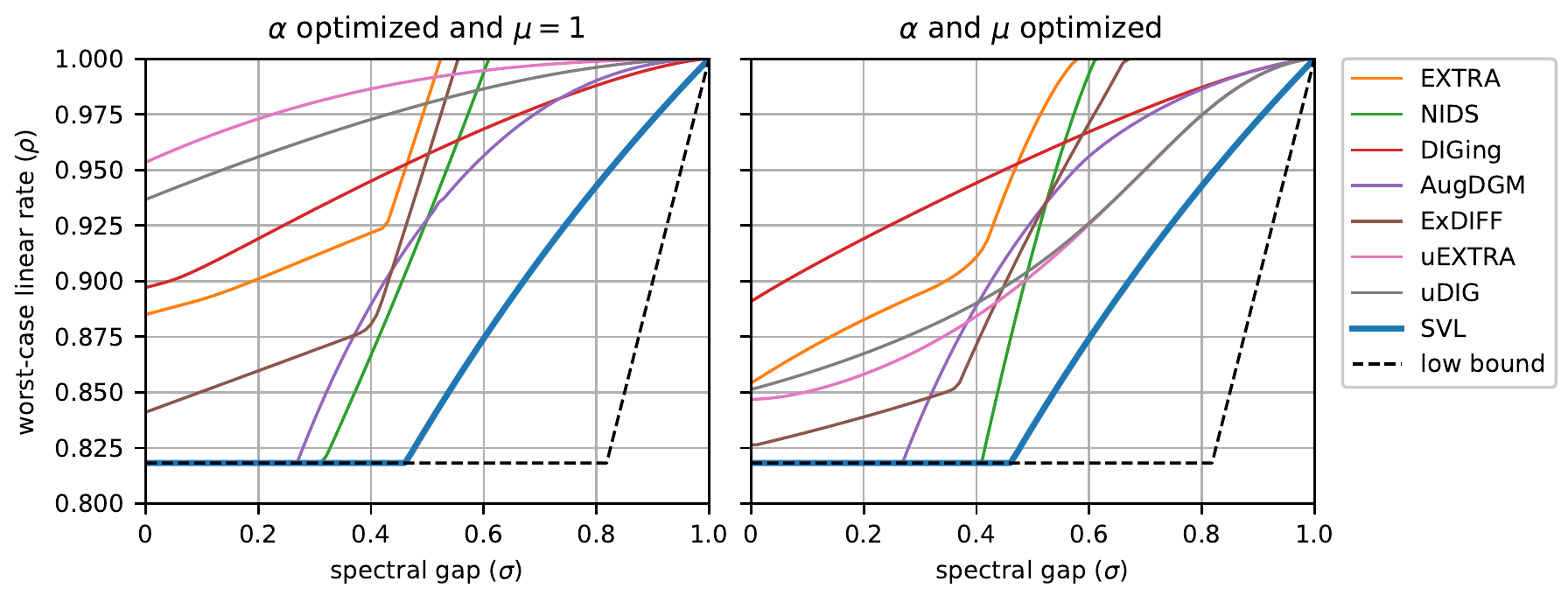}
	\caption{Comparison of upper bounds for linear convergence rate $\rho$ (smaller is better) as a function of graph connectedness $\sigma$, derived from Theorem~\ref{thm:main-result} using $\kappa=10$.
	(Left) stepsize $\alpha$ is optimized for each algorithm. (Right) both stepsize $\alpha$ and overelaxation parameter $\mu$ are optimized for each algorithm. The SVL algorithm (derived in Section~\ref{sec:algo_design}) outperforms all the tested methods. SVL has no tunable parameters so it is the same in both scenarios. The lower bound (see Section~\ref{sec:lower_bounds}) corresponds to $\rho \ge \tfrac{\kappa-1}{\kappa+1} \approx 0.818$ (optimal centralized gradient rate) and $\rho \ge \sigma$ (optimal average consensus rate).\label{fig:money_plot}}
\end{figure*}

\subsection{Special Cases}
\label{sec:special-cases}

\color{black}
We now show how the SVL algorithm reduces to well-known consensus and optimization algorithms in special cases.

\paragraph{$n=1$:} \blue{With} only \blue{one} agent, the distributed optimization problem~\eqref{eq:distrop-problem} is equivalent to centralized optimization. In this case, the Laplacian matrix is simply the scalar $\L^k=0$, so $v_1^k=0$ for all $k\ge 0$. Algorithm~\ref{alg:canonical} then simplifies to
\[
  x_1^{k+1} = x_1^k - \alpha\,\df(x_1^k), \qquad x_1^0\text{ arbitrary,}
\]
which is ordinary gradient descent with stepsize $\alpha$.  The fastest possible gradient rate of $\rho=\frac{\kappa-1}{\kappa+1}$ is achieved when $\alpha=\frac{2}{L+m}$.

\paragraph{$\kappa=1$:} When the condition ratio is unity (i.e., $m=L$), the distributed optimization problem~\eqref{eq:distrop-problem} is equivalent to average consensus. In this case, the parameters of SVL are simply $\alpha=\tfrac{1}{L}$, $\beta=1$, $\gamma=2$, and $\delta=1$. Also, the objective functions are quadratic, so we may assume without loss of generality that they have the form $\blue{f_i^k}(x) = \tfrac{L}{2} \|x-\blue{r_i^k}\|^2$, where~$\blue{r_i^k}\in\R^d$ is a parameter on agent $i\in\{1,\ldots,n\}$ \blue{at iteration $k$}. The SVL algorithm then simplifies to
\[
  x_i^{k+1} = x_i^k - \sum_{j=1}^n \L_{ij}^k\,x_j^k + \bigl(r_i^k-r_i^{k-1}\bigr), \qquad x_i^0 = r_i^0,
\]
which is a dynamic average consensus algorithm since the reference signals are continually injected into the dynamics~\cite{kia2019tutorial}. \blue{When the objective functions are constant, the $r_i$ terms cancel from the iterations and only affect the initial conditions.} \blue{This case} is \blue{referred to as \emph{static}} average consensus~\cite{tsitsiklis}, \blue{and} the worst-case rate of convergence is $\rho=\sigma$~\cite{xiaoboyd04}.


\section{Numerical Results}\label{numerics}
In this section, we compare the worst-case performance of SVL with \blue{that of} other first-order distributed algorithms.

\subsection{Algorithm Comparison (Upper Bounds)}
\label{sec:upper_bounds}
\blue{Theorem~\ref{thm:main-result} provides an upper bound on the worst-case convergence rate. We used this result to compare all algorithms}
in Table~\ref{table::algo_params}, \blue{including SVL}. The results are shown in Fig.~\ref{fig:money_plot}.
For each algorithm, we used a bisection search to find the smallest rate $\rho$ that yielded a feasible solution to the SDP~\eqref{eq:sdp-main}. We implemented the SDP in Julia~\cite{julia} with the JuMP~\cite{jump} modeling package and the Mosek \blue{interior point} solver~\cite{mosek}.
\blue{In an outer loop,} we performed a parameter search for each algorithm \blue{to find the step size $\alpha$ and overrelaxation parameter $\mu$} that yielded the smallest possible~$\rho$. Specifically, we used Brent's method and the Nelder--Mead method, respectively, as implemented in the Optim package~\cite{optim} as $\sigma$ ranged from 0 to 1.

As shown in Fig.~\ref{fig:money_plot}, \blue{optimizing over $\mu$ further improves worst-case performance}. Our proposed SVL algorithm outperforms all methods we tested.
\blue{Also shown in Fig.~\ref{fig:money_plot} is the lower bound described in Section~\ref{sec:lower_bounds}, namely $\rho \ge \max\{\tfrac{\kappa-1}{\kappa+1},\sigma\}$, which holds for any distributed algorithm.}

\begin{figure*}[ht]
	\centering
  \includegraphics[scale=0.90]{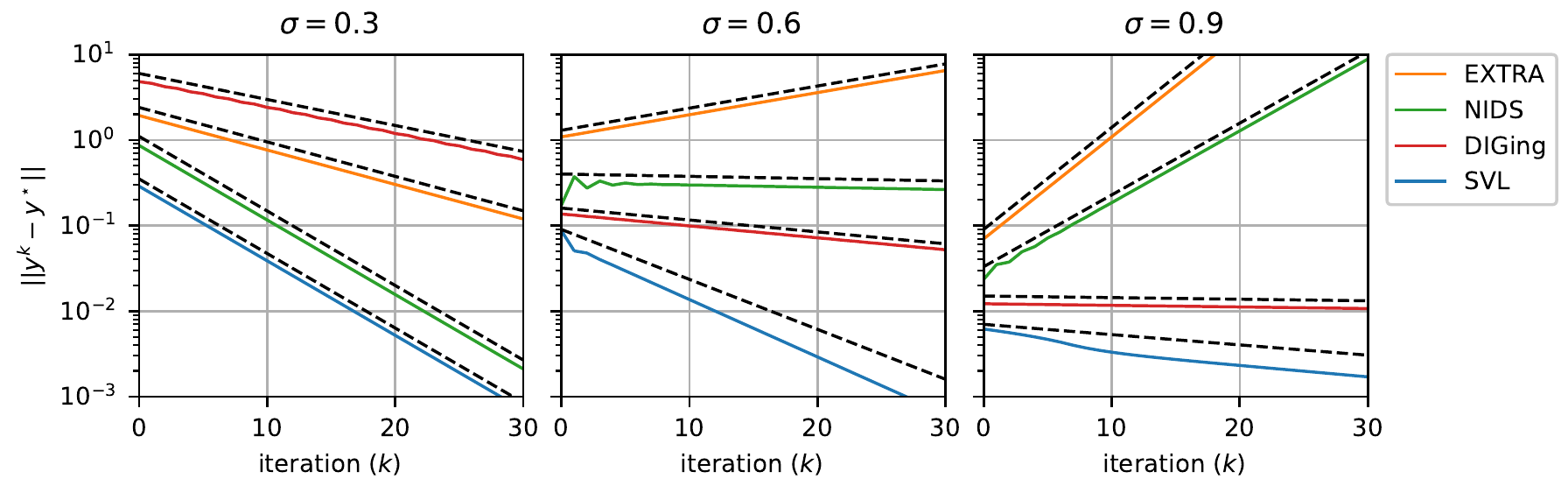}
	\caption{Approximate worst-case trajectories for EXTRA, NIDS, DIGing, and SVL. \blue{Trajectories were found by solving the relaxed problem~\eqref{opt:low_bound}}. We used $\alpha$ \blue{optimized} as in Fig.~\ref{fig:money_plot} and the default $\mu=1$. Simulations were performed for $\kappa=10$, $\sigma\in\{0.3, 0.6, 0.9\}$, and $n=d=2$. Dashed lines indicate corresponding upper bounds obtained from Theorem~\ref{thm:main-result} and shown in Fig.~\ref{fig:money_plot}. All traces were vertically translated to improve clarity.}
	\label{fig:wc-ex}
\end{figure*}

\subsection{Approximate Worst-Case Examples (Lower Bounds)}\label{sec:worst-case}

\blue{In an effort to show that the upper bounds for each algorithm in Fig~2 were likely tight, we searched for signals $\{x^k,u^k,v^k,y^k,z^k\}$ that satisfied~\eqref{alg} for some choice of $f_i$ and $\L^k$ satsifying Assumptions~\ref{assumption:local_functions} and \ref{assumption:Laplacian}, respectively.}

\blue{We first solved a relaxed version of the problem, where we replaced Assumptions~\ref{assumption:local_functions} and \ref{assumption:Laplacian} by the weaker conditions~\eqref{quad:func} and \eqref{quad:graph}, respectively. We used the following greedy heuristic. For a given algorithm and rate $\rho$, we solved~\eqref{eq:sdp-main} to obtain $(P,Q,R)$. At each time step $k$, we then maximized the Lyapunov increment $V^{k+1}-\rho^2 V^k$, where $V^k$ is defined in~\eqref{eq:V}.
We solved the following optimization problem for $k\ge 0$.}

\blue{\begin{equation}\label{opt:low_bound}
	\begin{aligned}
	\maximize_{u^k_i,v^k_i\in\R^d} \qquad &V^{k+1}-\rho^2 V^k \\
	\text{such that}\qquad & \text{\eqref{alg1}, \eqref{alg3}, and \eqref{quad:graph} hold,} \\
	& \text{\eqref{quad:func} holds for $i=1,\dots,n$,} \\
	& 1^\tp v^k = 0.
\end{aligned}
\end{equation}
For $k=0$, we also included $x^0$ as an optimization variable and the normalization $V^0=1$. For $k\ge 1$, we solved~\eqref{opt:low_bound} using the $x^k$ found at the previous iteration and warm-starting $u^k,v^k$. We used the Ipopt~\cite{Ipopt} local solver with default settings since~\eqref{opt:low_bound} is a nonconvex quadratically constrained quadratic program. Note that we must choose parameters $n$ and $d$.}

\blue{Our relaxed heuristic using $n=d=2$ was successful in constructing trajectories that matched the worst-case bounds from~\eqref{eq:sdp-main}. To illustrate, we simulated EXTRA, NIDS, DIGing, and SVL with $\kappa=10$ and a few values of $\sigma$ in Fig.~\ref{fig:wc-ex}. For each trajectory, we plotted $\|y^k-y^\star\|$ together with the corresponding upper bound $\rho$ found from Theorem~\ref{thm:main-result}. We obtained similar results for the other algorithms from Table~\ref{table::algo_params}.}

\blue{Since we used the relaxation~\eqref{quad:graph} to construct $z^k$ and $v^k$, there is no guarantee that there will exist a \emph{linear} Laplacian $\L^k$ such that $v^k = \L^k z^k$. However, finding whether such an $\L^k$ exists amounts to solving a convex optimization problem:
\begin{equation}\label{eq:constructL}
\begin{aligned}
\minimize_{\L^k \in \R^{n\times n}} \qquad & \norm{I-\Pi-\L^k} \\
\text{such that}\qquad &(\L^k \otimes I) z^k = v^k, \\
&\L^k 1 = 0,\quad 1^\tp \L^k = 0.
\end{aligned}
\end{equation}
If~\eqref{eq:constructL} is feasible and its optimal value is less than or equal to $\sigma$, then the associated $\L^k$ is a valid Laplacian matrix at timestep $k$. While there is no guarantee that \eqref{eq:constructL} will even be feasible, we reasoned that since there are $n^2$ variables and $2n+ndc$ linear constraints, where $d$ and $c$ are the number of rows of $C_y$ and $C_z$, respectively, we could increase our chances of finding feasible $\L^k$ with $n$ large and $d$ and $c$ small.}

\blue{In Figure~\ref{fig:NIDS-unstable}, we show a successful construction for the NIDS algorithm, which has $c=1$. We solved~\eqref{opt:low_bound} with $n=15$ and $d=1$, and solved~\eqref{eq:constructL} at each timestep. An optimal cost for~\eqref{eq:constructL} of $\sigma$ was always achieved.}
\blue{This result indicates that the upper bound for NIDS in Fig.~\ref{fig:money_plot} is likely tight, and that NIDS is not robustly stable in the time-varying setting. In other words, the \emph{network-independent} rate bound enjoyed by NIDS in the constant-graph setting~\cite[Thm.~2]{NIDS} does not carry over to the time-varying setting.}

\begin{figure}
	\centering
	\includegraphics[scale=0.90]{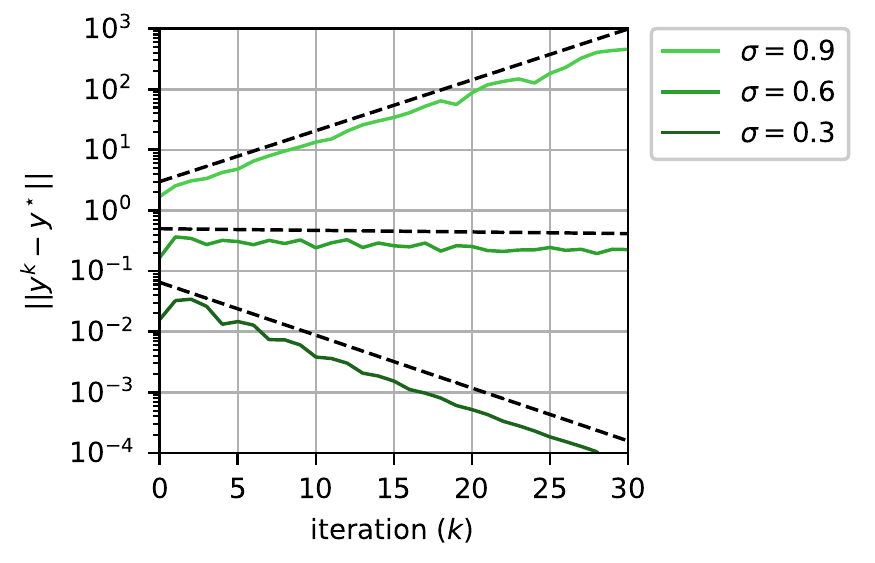}
	\caption{\blue{Worst-case trajectories for NIDS found by solving~\eqref{opt:low_bound} and successfully solving~\eqref{eq:constructL} to construct a sequence of Laplacians~$\{\L^k\}$. Simulations were performed using optimized $\alpha$, $\mu=1$, $\kappa=10$, $n=15$, and $d=1$ for $\sigma\in\{0.3,0.6,0.9\}$. Trajectories were plotted with their accompanying rate bounds (dashed lines) from Theorem~\ref{thm:main-result} and translated to improve clarity.}}
\label{fig:NIDS-unstable}
\end{figure}

\begin{rem}
	\blue{There may be other approaches to finding a worst-case $\L^k$ that perform better. For example, one might try alternating convex optimizations or including $\L^k$ directly as an optimization variable in a nonlinear program.} 
\end{rem}


\section{Conclusion}
\color{black}
We presented a universal analysis framework for a broad class of first-order distributed optimization algorithms over time-varying graphs.  The framework provides worst-case certificates of linear convergence via semidefinite programming, and we show empirically that our rate bounds are likely tight.  Optimizing the SDP from our analysis framework, we designed a novel distributed algorithm, SVL, which outperforms \blue{all known algorithms in this time-varying setting}.
\newpage


\begin{small}
\bibliographystyle{abbrv}
\bibliography{distralg_tcns}
\end{small}

\newpage
\appendix
\section{Appendix}

\subsection{Proof of Proposition~\ref{prop:fixedpoint}}\label{app:fixedpoint}

\blue{%
Suppose~\eqref{eq:fixedpoint} holds, and denote the optimizer of~\eqref{eq:distrop-problem} by $y_\text{opt}$. Then there exist vectors $p$ and $q$ such that
\begin{align*}
	\left\{\begin{aligned}
		0 &= (A-I)\,p \\
		y_\text{opt} &= C_y p \\
		0 &= F_x p
	\end{aligned}\right.
	\qquad\text{and}\qquad
	\left\{\begin{aligned}
		B_u &= (A-I)\,q \\
		D_{yu} &= C_y q \\
		D_{zu} &= C_z q.
	\end{aligned}\right.
\end{align*}
For all $i\in\{1,\ldots,n\}$, use these vectors to define the points
\begin{align*}
	x_i^\star &= p - q\,\df_i(y_\text{opt}), &
	y_i^\star &= y_\text{opt}, &
	z_i^\star &= C_z p, \\
	u_i^\star &= \df_i(y_\text{opt}), &
	v_i^\star &= 0.
\end{align*}
This is a fixed point of algorithm~\eqref{alg}, and the fixed point satisfies the conditions in~\eqref{eq:fixedpoint_conditions} since $y_\text{opt}$ is the optimizer of~\eqref{eq:distrop-problem}.

Now suppose $(x^\star,y^\star,z^\star,u^\star,v^\star)$ is a fixed point of~\eqref{alg} satisfying~\eqref{eq:fixedpoint_conditions}. Let $p = (1/n) \sum_{i=1}^n x_i^\star$. Since $1^\tp u^\star = 0$, $v^\star = 0$, and $1^\tp y^\star$ is unconstrained, we have from~\eqref{alg1} and~\eqref{alg3} that $p\ne 0$ is in the set~\eqref{eq:fixedpoint1}. Now let $v$ be any nonzero vector such that $v^\tp1=0$. Then from~\eqref{alg1}, we have that
\[
	0 = \bmat{A-I \\ C_y \\ C_z} (v^\tp x^\star) + \bmat{B_u \\ D_{yu} \\ D_{zu}} (v^\tp u^\star).
\]
Since this must hold for arbitrary $v^\tp u^\star$, this implies~\eqref{eq:fixedpoint2}.~\hfill\qed
}

\subsection{Proof of Theorem~\ref{thm:main-result}} \label{proof_main}

Assumptions~\ref{assumption:local_functions} and \ref{assumption:Laplacian} lead to quadratic inequalities that will be useful in proving our main result. These are stated in the following propositions.

\begin{prop}\label{prop:functions}
	Suppose Assumption~\ref{assumption:local_functions} holds for the local objective functions $f_i$. \blue{Let $(y_i^k,u_i^k)$ satisfy~\eqref{alg2}, and let $(y_i^\star,u_i^\star)$ be a fixed point that satisfies~\eqref{eq:fixedpoint_conditions}. Then
	\begin{equation}\label{quad:func}
		\bmat{\tilde y^k \\ \tilde u^k}^\tp (M_0\otimes I) \bmat{\tilde y^k \\ \tilde u^k} \ge 0.
	\end{equation}
	}
\end{prop}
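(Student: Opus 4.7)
The plan is to unwind the quadratic form $\bmat{\tilde y^k \\ \tilde u^k}^\tp (M_0\otimes I) \bmat{\tilde y^k \\ \tilde u^k}$ into a sum of $n$ per-agent quadratic forms, and then recognize each summand as (a scalar multiple of) the left-hand side of the sector bound in Assumption~\ref{assumption:local_functions}. The fixed-point conditions in~\eqref{eq:fixedpoint_conditions} are what make this translation work: since $y_1^\star = \cdots = y_n^\star = y_\text{opt}$ and $u_i^\star = \df_i(y_\text{opt})$, the local error coordinates are precisely $\tilde y_i^k = y_i^k - y_\text{opt}$ and $\tilde u_i^k = \df_i(y_i^k) - \df_i(y_\text{opt})$, which matches the arguments appearing in Assumption~\ref{assumption:local_functions}.

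Concretely, I would first use $u_i^k = \df_i(y_i^k)$ (from~\eqref{alg2}) together with the fixed-point identification $u_i^\star = \df_i(y_\text{opt})$ to rewrite Assumption~\ref{assumption:local_functions} (applied with $y = y_i^k$ and $y_\text{opt} = y_i^\star$) as the per-agent inequality
\[
\bigl(\tilde u_i^k - m\,\tilde y_i^k\bigr)^\tp \bigl(\tilde u_i^k - L\,\tilde y_i^k\bigr) \le 0.
\]
Expanding and multiplying by $-2$ gives
\[
-2mL\,\|\tilde y_i^k\|^2 + 2(L+m)\,(\tilde y_i^k)^\tp \tilde u_i^k - 2\,\|\tilde u_i^k\|^2 \ge 0,
\]
which is exactly $\bmat{\tilde y_i^k \\ \tilde u_i^k}^\tp M_0 \bmat{\tilde y_i^k \\ \tilde u_i^k} \ge 0$ (interpreting $\tilde y_i^k,\tilde u_i^k$ as $1\times d$ rows and the product appropriately).

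Finally, summing this inequality over $i=1,\ldots,n$ and noting that the block-diagonal structure corresponds exactly to the Kronecker product $M_0\otimes I_n$ yields the stated aggregate inequality. There is no real obstacle here; the only care needed is to verify that the fixed-point conditions~\eqref{eq:fixedpoint_conditions} guarantee $y_i^\star = y_\text{opt}$ for every $i$ (so that Assumption~\ref{assumption:local_functions} can be applied agent-by-agent with the common reference point), and to keep the Kronecker conventions straight when passing between per-agent scalar inequalities and the aggregate quadratic form.
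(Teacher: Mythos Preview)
Your proposal is correct and follows essentially the same approach as the paper: expand the Kronecker quadratic form into a sum over agents, recognize each summand as $-2(\tilde u_i^k - m\tilde y_i^k)^\tp(\tilde u_i^k - L\tilde y_i^k)$, and invoke Assumption~\ref{assumption:local_functions} agent-by-agent using the fixed-point identification $y_i^\star = y_\text{opt}$, $u_i^\star = \df_i(y_\text{opt})$. The paper's proof is simply a more condensed version of exactly this argument.
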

\blue{%
\begin{proof}
Using the definition of $M_0$, the quadratic form is
\[
	\bmat{\tilde y^k \\ \tilde u^k}^\tp (M_0\otimes I) \bmat{\tilde y^k \\ \tilde u^k}
		= -2\sum_{i=1}^n (\tilde u_i^k - m\tilde y_i^k)^\tp (\tilde u_i^k - L\tilde y_i^k).
\]
Since the fixed point satisfies~\eqref{eq:fixedpoint_conditions}, Assumption~\ref{assumption:local_functions} implies that this is nonnegative with $y_\text{opt} = y_1^\star = \ldots = y_n^\star$.
\end{proof}
}

\begin{prop}\label{prop:graph}
	Suppose Assumption~\ref{assumption:Laplacian} holds for the graph $\mathcal{G}^k$ at each iteration. \blue{Let $(z_i^k,v_i^k)$ satisfy~\eqref{alg2}, and let $(z_i^\star,v_i^\star)$ be a fixed point that satisfies~\eqref{eq:fixedpoint_conditions}. Then for all $R\succeq 0$,
	\begin{equation}\label{quad:graph}
		\bmat{\tilde z^k \\ \tilde v^k}^\tp \bigl( M_1\otimes (I-\Pi)\otimes R\bigr) \bmat{\tilde z^k \\ \tilde v^k} \ge 0.
	\end{equation}
	}
\end{prop}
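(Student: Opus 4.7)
The plan is to expand the quadratic form, reduce it to the ``disagreement components'' of $z^k$ and $v^k$, and then invoke the spectral gap bound from Assumption~\ref{assumption:Laplacian}. Writing out the $2\times 2$ block structure of $M_1$, the left-hand side of~\eqref{quad:graph} equals
\[
(\sigma^2-1)(\tilde z^k)^\tp\bigl((I-\Pi)\otimes R\bigr)\tilde z^k + 2(\tilde z^k)^\tp\bigl((I-\Pi)\otimes R\bigr)\tilde v^k - (\tilde v^k)^\tp\bigl((I-\Pi)\otimes R\bigr)\tilde v^k.
\]
The fixed-point conditions~\eqref{eq:fixedpoint_conditions} give $(I-\Pi)\,z^\star = 0$ (with the obvious lifting to the per-agent communication dimension) and $v^\star = 0$, so the $\star$-offsets are annihilated by the $(I-\Pi)$ factor and I may replace each $\tilde{(\cdot)}^k$ by the corresponding $(\cdot)^k$ throughout.

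Next, I introduce the projected vectors $\bar z \defeq \bigl((I-\Pi)\otimes I\bigr)z^k$ and $\bar v \defeq \bigl((I-\Pi)\otimes I\bigr)v^k$. Since $I-\Pi$ is a symmetric idempotent, each $(I-\Pi)\otimes R$ factors as $\bigl((I-\Pi)\otimes I\bigr)(I\otimes R)\bigl((I-\Pi)\otimes I\bigr)$, and the expression above simplifies (after completing the square) to
\[
\sigma^2\,\bar z^\tp(I\otimes R)\,\bar z \;-\; (\bar z-\bar v)^\tp(I\otimes R)(\bar z-\bar v).
\]
To relate $\bar v$ to $\bar z$, I use the balanced-graph property $1^\tp\L^k = 0$ from item~\ref{it:a2} of Assumption~\ref{assumption:Laplacian}, which gives $\Pi\L^k = 0$ and hence $(I-\Pi)\L^k = \L^k$. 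Combined with $v^k = (\L^k\otimes I)z^k$, this yields $\bar v = (\L^k\otimes I)\bar z$, and since $\bar z$ lies in the range of $(I-\Pi)\otimes I$, I can absorb an extra $(I-\Pi)$ to write $\bar z - \bar v = \bigl((I-\Pi-\L^k)\otimes I\bigr)\bar z$.

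Finally, item~\ref{it:a3} of Assumption~\ref{assumption:Laplacian} provides $\norm{I-\Pi-\L^k}\le\sigma$, equivalently $(I-\Pi-\L^k)^\tp(I-\Pi-\L^k)\preceq\sigma^2 I$. Tensoring with $R\succeq 0$ preserves the inequality, so
\[
(\bar z-\bar v)^\tp(I\otimes R)(\bar z-\bar v) \;\le\; \sigma^2\,\bar z^\tp(I\otimes R)\,\bar z,
\]
which gives~\eqref{quad:graph}. I do not foresee a serious obstacle; the only mild bookkeeping task is tracking the Kronecker structure between the agent index and the per-agent communication dimension, after which the inequality follows directly from the spectral bound.
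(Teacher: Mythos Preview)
Your proposal is correct and rests on the same core idea as the paper: project onto the disagreement subspace via $I-\Pi$, use $\Pi\L^k=\L^k\Pi=0$ from Assumption~\ref{assumption:Laplacian} to rewrite $\bar z-\bar v=((I-\Pi-\L^k)\otimes I)\bar z$, and then invoke the spectral gap bound $\norm{I-\Pi-\L^k}\le\sigma$.

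The only noteworthy difference is how the matrix $R\succeq 0$ is handled. The paper first establishes the scalar inequality
\[
\bmat{\phi\\ \L^k\phi}^\tp\bigl(M_1\otimes(I-\Pi)\bigr)\bmat{\phi\\ \L^k\phi}\ge 0
\]
and then lifts to general $R$ via an eigendecomposition $R=\sum_\ell \mu_\ell w_\ell w_\ell^\tp$, summing the scalar inequality over the eigenvectors. You instead keep $R$ as a Kronecker factor throughout and use the one-line fact that $A\preceq B$ and $C\succeq 0$ imply $A\otimes C\preceq B\otimes C$. Your route is slightly more direct; the paper's route makes the reduction to the scalar case more explicit. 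Both arrive at the same inequality with the same ingredients.
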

\begin{proof}
From the definition of the matrix norm and Assumption~\ref{assumption:Laplacian}, we have that
\begin{align*}
	\sigma \ge \normm{I-\Pi-\L^k}
	&= \normm{(I-\Pi-\L^k)(I-\Pi)} \\
	&= \max_{y\in \R^{n}, y\ne 0} \frac{\normm{(I-\Pi-\L^k)(I-\Pi)y}}{\norm{y}}.
\end{align*}
Without loss of generality, $y = \Pi \eta + (I-\Pi)\,\phi$, where $\eta$ and $\phi$ are arbitrary. By orthogonality, $\norm{y}^2 = \norm{\Pi \eta}^2 + \norm{(I-\Pi)\,\phi}^2$. Substituting the decomposition of $y$ into the above \blue{inequality},
\begin{align*}
	\sigma &\ge
	\max_{\phi,\eta\in\R^n, y\ne 0} \frac{\normm{(I-\Pi-\L^k)(I-\Pi)\,\phi}}{\sqrt{\norm{\Pi \eta}^2 + \norm{(I-\Pi)\,\phi}^2 }} \\
	&= \max_{\phi\in\R^n, y\ne 0} \frac{\normm{(I-\Pi-\L^k)(I-\Pi)\,\phi}}{\norm{(I-\Pi)\,\phi}} \\
	&= \max_{\phi\in\R^n, y\ne 0} \frac{\normm{(I-\Pi)(\phi-\L^k \phi)}}{\norm{(I-\Pi)\,\phi}},
\end{align*}
where the last two steps follow because the maximum is attained with $\eta=0$, and $\L^k \Pi = \Pi \L^k = \0$. Squaring both sides and rewriting as a quadratic form yields
\begin{align}\label{ineq_simple}
	\bmat{\phi \\ \L^k\phi}^\tp \bigl(M_1\otimes (I-\Pi)\bigr) \bmat{\phi \\ \L^k\phi} \ge 0
\end{align}
\blue{%
for all $\phi\in\R^n$. Now let $p$ denote the dimension of $z_i^k$. Then since $R\succeq 0$, it has the decomposition
\[
	R = \sum_{\ell=1}^p \mu_\ell \, w_\ell w_\ell^\tp,
\]
where $w_\ell\in\R^p$ and $\mu_\ell\ge 0$. Then using that $\tilde v^k = (\L^k\otimes I_p)\,\tilde z^k$, the quadratic form is
\begin{align*}
	\bmat{\tilde z^k \\ \tilde v^k}^\tp \bigl(M_1\otimes (I-\Pi)\otimes R\bigr) \bmat{\tilde z^k \\ \tilde v^k}
	&= \sum_\ell \mu_\ell \bmat{\star}^\tp \bigl(M_1\otimes (I-\Pi)\bigr) \bmat{(I\otimes w_\ell^\tp)\,\tilde z^k \\[1pt] (I\otimes w_\ell^\tp)\,\tilde v^k} \\
	&= \sum_\ell \mu_\ell \bmat{\star}^\tp \bigl(M_1\otimes (I-\Pi)\bigr) \bmat{(I\otimes w_\ell^\tp)\,\tilde z^k \\[1pt] \L^k\,(I\otimes w_\ell^\tp)\,\tilde z^k},
\end{align*}
which is nonnegative from~\eqref{ineq_simple} with $\phi \leftarrow (I\otimes w_\ell^\tp)\,\tilde z^k$.
}\end{proof}

\blue{%
Let $(x^k,y^k,z^k,u^k,v^k)$ denote a trajectory of algorithm~\eqref{alg}. Since the algorithm satisfies the fixed point conditions~\eqref{eq:fixedpoint} (by assumption), we have from Proposition~\ref{prop:fixedpoint} that there exists a fixed point $(x^\star,y^\star,z^\star,u^\star,v^\star)$ satisfying~\eqref{eq:fixedpoint_conditions}. The global optimizer is unique from Assumption~\ref{assumption:local_functions}, so the fixed point conditions~\eqref{eq:fixedpoint_consensus_optimality} imply that $y_1^\star = \ldots = y_n^\star = y_\text{opt}$ with $y_\text{opt}$ the optimizer of~\eqref{eq:distrop-problem}.

Since the trajectory satisfies the invariant~\eqref{alg3} and the columns of $\Psi$ form a basis for the nullspace of $\bmat{F_x & F_u}$, there exists a vector $\tilde s^k$ such that
\[
	\Psi\, \tilde s^k = \frac{1}{\sqrt{n}}\sum_{i=1}^n \bmat{\tilde x_i^k \\ \tilde u_i^k}.
\]
Multiplying the matrix in~\eqref{eq:lb-rho} on the right and left by $\tilde s^k$ and its transpose, respectively, we obtain the consensus inequality
\begin{subequations}\label{eq:inequalities}
	\begin{equation}\label{eq:cons_ineq}
		(\tilde x^{k+1})^\tp (\Pi\otimes P)\, \tilde x^{k+1}
		- \rho^2\, (\tilde x^k)^\tp (\Pi\otimes P)\, \tilde x^k \\
		+ \bmat{\tilde y^k \\ \tilde u^k}^\tp (M_0\otimes\Pi) \bmat{\tilde y^k \\ \tilde u^k} \leq 0.
	\end{equation}
	Now choose the vectors $w_2,\ldots,w_n\in\R^n$ such that the matrix $\bmat{1_n/\sqrt{n} & w_2 & \ldots & w_n}$ is orthonormal. Then we can multiply the matrix in~\eqref{eq:lmi-dis} on the right and left by the weighted sum
	\[
		\sum_{i=1}^n (w_\ell)_i \bmat{\tilde x_i^k \\ \tilde u_i^k \\ \tilde v_i^k}
	\]
	and its transpose, respectively, and sum over $\ell\in\{2,\ldots,n\}$ to obtain the disagreement inequality
	\begin{multline}\label{eq:dis_ineq}
	(\tilde x^{k+1})^\tp \bigl((I-\Pi)\otimes Q\bigr) \tilde x^{k+1} - \rho^2 (\tilde x^k)^\tp \bigl((I-\Pi)\otimes Q\bigr)\tilde x^k \\
	+ \bmat{\tilde y^k \\ \tilde u^k}^\tp \bigl(M_0\otimes (I-\Pi)\bigr) \bmat{\tilde y^k \\ \tilde u^k}
	+ \bmat{\tilde z^k \\ \tilde v^k}^\tp\bigl( M_1\otimes (I-\Pi)\otimes R\bigr) \bmat{\tilde z^k \\ \tilde v^k} \le 0,
	\end{multline}
\end{subequations}
where we used that $\{w_i\}_{i=1}^n$ form an orthonormal basis for $\R^n$. Summing the inequalities in~\eqref{eq:inequalities}, we obtain
\begin{equation*}
	V^{k+1} - \rho^2 V^k
	+ \bmat{\tilde y^k \\ \tilde u^k}^\tp (M_0\otimes I) \bmat{\tilde y^k \\ \tilde u^k}
	+ \bmat{\tilde z^k \\ \tilde v^k}^\tp \bigl( M_1\otimes (I-\Pi)\otimes R\bigr) \bmat{\tilde z^k \\ \tilde v^k} \le 0,
\end{equation*}
where $V^k$ is defined in~\eqref{eq:V}. The quadratic forms in the last two terms are nonnegative from Propositions~\ref{prop:functions} and~\ref{prop:graph}, which implies $V^{k+1}\le\rho^2\,V^k$. We then apply this inequality iteratively to obtain $V^k \le \rho^{2k}\,V_0$ for all $k\ge 0$. Now define
\[
	T \defeq \Pi\otimes P + (I-\Pi)\otimes Q,
\]
and note that $T\succ 0$ since $P$ and $Q$ are positive definite. Then letting $\cond(T) = \lambda_\text{max}(T)/\lambda_\text{min}(T)$ denote the condition number of $T$, we have the bound
\begin{align*}
	\|x_i^k\!-\!x_i^\star\|^2 &\le \|x^k\!-\!x^\star\|^2
	\le \cond(T)\,V^k
	\le \rho^{2k}\cond(T)\,V^0.
\end{align*}
Therefore, the bound~\eqref{eq:bound} holds with $c = \sqrt{\cond(T)\,V^0}$.~\hfill\qed
}

\subsection{Proof of Theorem~\ref{thm:SVL}}\label{sec:SVL-proof}

\blue{Substituting the template~\eqref{eq:SVL-template} into the LMI~\eqref{eq:lb-rho} reduces to
\[
  P_{11} \bmat{1-\rho^2 & -\alpha \\ -\alpha & \alpha^2} + M_0 \preceq 0,
\]
which is satisfied with $\alpha = (1-\rho)/m$ and $P_{11}=\tfrac{m\,(L-m)}{\rho\,(1-\rho)}$. Note that this LMI is known to describe the convergence rate of centralized gradient descent; see~\cite[Section 4.4]{lessard16}.}

Now consider the potential solution \blue{to~\eqref{eq:lmi-dis} given by}
\begin{align*}
  \blue{Q} &= \frac{t_3}{\alpha^2\rho^2} \bmat{1+\rho^2\frac{t_1}{t_4} & -1 \\ -1 & 1}
  \quad\text{and}\quad
  \blue{R} = \frac{t_5}{\alpha^2 t_2},\quad\text{where} \hspace{-5cm}\\
	t_1 &\defeq 2\,(1-\beta)-\eta, &
	t_2 &\defeq \beta-1+\rho^2, \\
	t_3 &\defeq \beta\,(\eta+2\rho^2)-\eta\,(1-\rho^2), &
	t_4 &\defeq 2\beta\rho^2 - \eta\,(1-\rho^2), \\
	t_5 &\defeq (1-\beta-\rho)\bigl(\beta\,(\eta+2\rho^2)-(1-\rho^2) (1-\kappa+2\kappa\rho)\bigr),\hspace{-5cm}\\
	t_6 &\defeq \bigl(2-\alpha(L+m)\bigr)(1\!-\!\rho^2)^2 \!-\! \bigl(2(1\!-\!\rho^4)-\alpha(L+m)\bigr)\beta.\hspace{-5cm}
\end{align*}
Using these values along with the value for $\sigma^2$ in~\eqref{eq:sigopt}, the matrix in~\eqref{eq:lmi-dis} is equal to the rank-one matrix $-\tfrac{1}{t_2 t_4} z z^\tp$, where
\begin{align*}
	z \defeq \frac{1}{\alpha\rho} \bmat{
		t_6 \\
		-t_2 t_3 \\
		\alpha\,t_2\,\bigl(2-\alpha\,(L+m)\bigr) \\
		\beta\,\bigl(t_3-\alpha\rho^2(L+m)\bigr)}.
\end{align*}
In order for this to be a valid solution, we must have $t_3 > 0$ and $t_1/t_4 > 0$ (so that $\blue{Q}\succ 0$), $t_5/t_2\ge 0$ (so that $\blue{R\succeq 0}$), and~$t_2 t_4>0$ (so that~\eqref{eq:lmi-dis} holds). All of these inequalities hold if and only if~\eqref{eq:beta-constraint} holds. Therefore, the SDP has a rank-one solution using the parameters in~\eqref{eq:paramopt} if $\beta$ and $\rho$ satisfy~\eqref{eq:sol-opt}. The convergence bound then follows from Theorem~\ref{thm:main-result} \blue{and Remark~\ref{rem:bounds}.}~\hfill\qed

\end{document}